\documentclass[11pt,reqno]{amsart}

\usepackage[T1]{fontenc}
\usepackage{lmodern}
\usepackage{microtype}
\usepackage{amsmath,amssymb,mathtools,mathrsfs}
\usepackage{enumitem}
\usepackage{xcolor}
\usepackage[colorlinks=true,linkcolor=blue,citecolor=blue,urlcolor=blue]{hyperref}

\newtheorem{theorem}{Theorem}[section]
\newtheorem{corollary}[theorem]{Corollary}
\newtheorem{lemma}[theorem]{Lemma}

\numberwithin{equation}{section}

\newcommand{\Czero}{\mathbb C_0}
\newcommand{\D}{\mathbb D}
\newcommand{\cD}{\mathcal D}
\newcommand{\BMAD}{BMOA_{\mathcal D}}
\newcommand{\VMAD}{VMOA_{\mathcal D}}
\newcommand{\LMAD}{LMOA_{\mathcal D}}
\newcommand{\dd}{\,d}
\newcommand{\Real}{\operatorname{Re}}

\title[Volterra operators on the BMOA space of Dirichlet series]
{Volterra operators on the BMOA space of Dirichlet series}

\author[C. Tong]{Cezhong Tong}
\address{Cezhong Tong, Institute of Mathematics, Hebei University of Technology,
Tianjin 300401, China}
\email{ctong@hebut.edu.cn}
\email{cezhongtong@hotmail.com}

\author[M. Wang]{Maofa Wang}
\address{Maofa Wang, School of Mathematics and Statistics, Wuhan University,
Wuhan 430072, China}
\email{mfwang.math@whu.edu.cn}

\author[Z. Yuan]{Zixing Yuan}
\address{Zixing Yuan, Institute of Mathematics, Hebei University of Technology,
Tianjin 300401, China}
\email{zxyuan.math@whu.edu.cn}
\date{\today}

\subjclass[2020]{Primary 30B50; Secondary 30H35, 47B38}
\keywords{BMOA space of Dirichlet series, Bergman space, Volterra operator, compact operator
}

\begin{document}

\begin{abstract}
In this paper, we completely characterize the boundedness and compactness of
Volterra operators on \(BMOA_{\mathcal D}\), the BMOA space of Dirichlet series, and estimate their essential and weak essential norms. We further
prove that compactness, weak compactness, and complete continuity are
equivalent for these operators.
In particular, every Dirichlet polynomial symbol induces a compact
Volterra operator on \(BMOA_{\mathcal D}\). We also
prove that every such symbol induces a compact Volterra operator on the
Bergman space of Dirichlet series \(\mathcal A_\mu^1\) for every admissible
measure \(\mu\), extending the corresponding compactness theorem,
previously established for \(1<p<\infty\), to the endpoint \(p=1\).

\end{abstract}

\maketitle
\section{Introduction}

Let \(\cD\) denote the class of ordinary Dirichlet series
\(f(s)=\sum_{n\geq1}a_n n^{-s}\) that converge in some right half-plane, and set
\(\Czero=\{s:\Real s>0\}\).  The space \(BMOA(\Czero)\) consists of the
holomorphic functions \(f\) in \(\Czero\) such that
\begin{align}\label{BMO}
 \sup_{\sigma>0}\int_{\mathbb R}
 \frac{|f(\sigma+it)|^2}{1+\sigma^2+t^2}\,\dd t<\infty
\end{align}
and
\[
 \|f\|_{\operatorname{BMO}(\Czero)}
 :=\sup_{I\subset\mathbb R}\frac1{|I|}\int_I
 \left|f(it)-\frac1{|I|}\int_I f(i\tau)\dd\tau\right|\dd t<\infty,
\]
where the supremum is taken over all intervals $I\subset\mathbb R$ and
$|I|$ denotes the length of $I$. Note that \eqref{BMO} implies
$F(s):=f(s)/(1+s)$ belongs to $H^2(\mathbb{C}_0)$,
and hence the boundary limit
$f(it)=\lim_{\sigma \rightarrow 0^+}f(\sigma+it)$
exists for almost all real $t$ by Fatou's theorem applied to $F(s)$.

We define $\BMAD:=BMOA(\Czero)\cap\cD$ and equip it with the norm \[ \|f\|_{\BMAD} :=|f(+\infty)| +\|f\|_{\operatorname{BMO}(\Czero)},\] where \(f(+\infty)=a_1\). We define \(\VMAD\) as the closure of the Dirichlet polynomials in \(\BMAD\), equipped with the \(\BMAD\) norm.

Let \(\D=\{z\in\mathbb C:|z|<1\}\), and let \(H(\D)\) denote
the space of analytic functions on \(\D\).  For \(g\in H(\D)\), the classical
Volterra-type operator \(J_g\) is defined by
\[
 J_gf(z)=\int_0^z f(\zeta)g'(\zeta)\dd\zeta,
\]
for \(z\in\D\) and \(f\in H(\D)\).  The discussion of the Volterra-type
operator \(J_g\) first arose in connection with semigroups of composition
operators.  Aleman and Siskakis \cite{AlemanSiskakis,AlemanSiskakisBergman} studied the boundedness and compactness of \(J_g\)
on classical Hardy and Bergman spaces. Subsequently,
Siskakis and Zhao \cite{SiskakisZhao} characterized its boundedness and compactness
on \(BMOA(\mathbb D)\) and \(VMOA(\mathbb D)\).   Laitila, Miihkinen and Nieminen later obtained estimates
for the essential and weak essential norms and proved that weak compactness and
compactness  coincide
\cite[Theorem~2]{LMN}.
For further details and background, see for example
\cite{MiihkinenPauPeralaWang,
Pommerenke,SiskakisSurvey,SiskakisZhao}.

In the Dirichlet series setting, given \(g\in\cD\), the Volterra operator
\(T_g\) is defined for Dirichlet series \(f\) by
\begin{equation}
T_gf(s)=-\int_s^{+\infty}f(w)g'(w)\dd w,
\end{equation}
where \(\Real s\) is large enough.  This operator was initially defined by
Brevig, Perfekt and Seip \cite{BPS}.  In their work, they gave some necessary
and sufficient conditions for the boundedness of \(T_g\) acting on Hardy
spaces of Dirichlet series.  More recently, the properties of Volterra operators
\(T_g\) on Bergman spaces of Dirichlet series and Bloch spaces of Dirichlet series were also investigated, see
\cite{BommierHato,ChenWang,GomezCabello}.  To the best of our knowledge, \(T_g\) has not previously been studied on \(BMOA_{\mathcal D}\) or \(VMOA_{\mathcal D}\).  In this paper, we continue this line of research by completely characterizing
the boundedness and compactness of Volterra operators \(T_g\) on
\(BMOA_{\mathcal D}\) and \(VMOA_{\mathcal D}\), and estimating their
essential and weak essential norms.

Write \(s=\sigma+i\tau\), and, for \(t\in\mathbb R\) and \(h>0\), set
\[
 Q(t,h)=\{\sigma+i\tau:0<\sigma<h,\ t<\tau<t+h\}.
\]
For \(g\in\BMAD\) and \(0<\delta\leq1\), set \(d\mu_g(s)=|g'(s)|^2\sigma\dd\sigma\,\dd\tau\) and
\[
 [g]_{\LMAD,\delta}
 :=\sup_{\substack{t\in\mathbb R\\0<h\leq\delta}}
 \log(e/h)\left(\frac{\mu_g(Q(t,h))}{h}\right)^{1/2}.
\]
Define
\[
 \LMAD
 =\left\{g\in\BMAD:[g]_{\LMAD}:=[g]_{\LMAD,1}<\infty\right\}.
\]
We equip \(\LMAD\) with the norm
$
 \|g\|_{\LMAD}=|g(+\infty)|+[g]_{\LMAD}.
$ We also set
\[
LMOA_{\mathcal D,0}
=\left\{g\in\LMAD:
\lim_{\delta\rightarrow0}[g]_{\LMAD,\delta}=0\right\}.
\]

Our first main result is the following.
\begin{samepage}
\begin{theorem}\label{thm:bounded}
Let \(g\in\cD\).  The following
conditions are equivalent:
\begin{description}
 \setlength{\itemsep}{0pt}
 \setlength{\parsep}{0pt}
 \setlength{\topsep}{2pt}
 \setlength{\partopsep}{0pt}
 \item[(i)] \(T_g:\BMAD\rightarrow\BMAD\) is bounded;
 \item[(ii)] \(T_g:\VMAD\rightarrow\VMAD\) is bounded;
 \item[(iii)] \(g\in\LMAD\).
\end{description}
Moreover, 
\[
 \|T_g\|_{\BMAD\to\BMAD}\asymp \|T_g\|_{\VMAD\to\VMAD} \asymp[g]_{\LMAD}.
\]
\end{theorem}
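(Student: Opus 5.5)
We will transfer the classical Carleson-measure picture of \(BMOA(\D)\) to the half-plane and then use the Dirichlet structure only at two points: the behaviour at \(+\infty\) and the translation invariance in \(\tau\). The basic tool is the Garnett–Fefferman characterization on \(\Czero\). For \(f\) satisfying \eqref{BMO}, \(\|f\|_{\operatorname{BMO}(\Czero)}^2\) is comparable to
\[
\sup_{t,h} \frac1h\int_{Q(t,h)}|f'(s)|^2\sigma\dd\sigma\dd\tau .
\]
Since \((T_gf)'=fg'\) and \(T_gf(+\infty)=0\), we get
\[
\|T_gf\|_{\BMAD}^2\asymp \sup_{t,h}\frac1h\int_{Q(t,h)}|f|^2\dd\mu_g ,
\]
provided we first check that \(T_gf\) satisfies \eqref{BMO}. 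So the statement reduces to deciding when \(\mu_g\) is a Carleson measure for the weighted condition with weight \(|f|^2\), uniformly over \(f\) in the unit ball of \(\BMAD\). That \(T_gf\) lies in \(\cD\) is automatic from the formula, by termwise integration for \(\Real s\) large.

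\textbf{(iii)\(\Rightarrow\)(i).} Fix a square \(Q=Q(t,h)\) and let \(s_Q=h+i(t+h/2)\). Split \(f=(f-f(s_Q))+f(s_Q)\).

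For the constant part we need the growth estimate
\[
|f(s_Q)|\lesssim \log(e/h)\,\|f\|_{\BMAD}, \qquad h\le1.
\]
This follows from the standard BMO estimate \(|f(\sigma+i\tau)-f(1+i\tau)|\lesssim \log(e/\sigma)\|f\|_{\operatorname{BMO}}\). We also need \(\sup_\tau|f(1+i\tau)|\lesssim\|f\|_{\BMAD}\); this holds because Dirichlet series are uniformly controlled on \(\Real s\ge1\) by their \(H^2\)-type norm, and that norm is dominated by \(|a_1|+\|f\|_{\operatorname{BMO}}\) through \eqref{BMO} and the almost-periodicity in \(\tau\). With this estimate the constant part contributes
\[
|f(s_Q)|^2\mu_g(Q)/h\le [g]_{\LMAD}^2\|f\|^2 .
\]

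For the oscillating part, \(|f-f(s_Q)|^2\dd\mu_g\) on \(Q\) is controlled by the classical argument of Siskakis–Zhao. Since \(\mu_g\) is a Carleson measure with constant \(\lesssim[g]_{\LMAD}^2\), and \(f-f(s_Q)\) restricted to the tent over \(Q\) has \(H^2\)-norm squared \(\lesssim h\|f\|_{\operatorname{BMO}}^2\) (conformal invariance of BMO), the oscillating part is also bounded.

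Squares with \(h>1\) are handled as follows. Vertical translation invariance together with the decay of \(g'\) as \(\sigma\to\infty\) (for a Dirichlet series, \(g'(s)=O(2^{-\sigma})\)) reduces them to \(h\le1\) plus a uniformly bounded tail. This is the place where \(g\in\cD\) is used essentially.

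\textbf{(i)\(\Rightarrow\)(iii).} This is the main obstacle, since we need test functions inside \(\BMAD\). The classical choice \(\log(1/(1-\bar a z))\) is not a Dirichlet series. Instead, for \(s_0=h+it_0\) we take
\[
f_{s_0}(s)=\log\zeta(s+h-it_0)-\log\zeta(1+2h)\quad\text{or}\quad f(s)=\log\bigl(1-2^{-s-h+it_0}\bigr)^{-1}\text{-type series},
\]
adjusted so that \(\|f_{s_0}\|_{\BMAD}\lesssim1\) and \(|f_{s_0}|\gtrsim\log(e/h)\) on \(Q(t_0-h/2,h)\). The candidate \(\log\zeta\) has a logarithmic singularity at the pole \(s=1\), so the natural choice is a shift that places a \(\log\frac1{s-s^*}\) singularity near \(it_0\). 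Its BMO norm should be bounded uniformly because the local singularity is exactly \(\log(1/(s+h-it_0))\) plus a bounded analytic part. The delicate point is the global behaviour of \(\log\zeta\) near the line \(\Real s=1\): one must avoid zeros and control the growth in \(\tau\). If this fails, we would try instead a primes-free construction. Writing \(g=\sum_n b_nn^{-s}\), the Bohr lift shows that the quantities involved depend only on finitely many variables up to small error, and one then uses a \(\log(1/(1-\bar a z))\)-type function in a single variable \(z=2^{-s}\). Such a function is a genuine Dirichlet series in \(2^{-s}\) whose BMO norm on \(\Czero\) equals that of the classical log-function on \(\D\) composed with the map \(s\mapsto 2^{-s}\). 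Its boundary values are periodic, and BMO on \(\mathbb R\) is then controlled by periodic BMO on one period with the Carleson-box geometry distorted by a factor \(\log2\). Given these test functions, the lower bound reads
\[
\log^2(e/h)\mu_g(Q)/h\lesssim\|T_gf_{s_0}\|^2\le\|T_g\|^2 .
\]

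\textbf{The VMOA statement.} For (i)\(\Leftrightarrow\)(ii): \(T_g\) maps Dirichlet polynomials into \(\VMAD\) whenever \(g\in\LMAD\). Indeed, \(pg'\) with \(p\) a polynomial gives a vanishing Carleson condition, since \(p\) is bounded and \(\mu_g(Q)/h\to0\) under the \(\log\) weight. Hence (iii) gives (ii) with the same bound. Conversely, the test functions above can be approximated in \(\VMAD\), for instance by \(f_{s_0}(s+\varepsilon)\) followed by truncation, with uniform norms. The lower bound argument therefore applies to \(\|T_g\|_{\VMAD\to\VMAD}\) as well, which yields all the stated norm equivalences.
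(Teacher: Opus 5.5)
Your proposal follows essentially the paper's proof. The shared ingredients are the Carleson-box reformulation, the split \(f=(f-f(s_Q))+f(s_Q)\) with the \(\log(e/h)\) growth bound and a Carleson embedding for the oscillation, the \(2^{-\sigma}\) decay of \(g'\) for tall boxes, the vanishing Carleson condition for \(T_gP\), and the one-variable test functions \(\log\frac{1}{1-\overline a\,2^{-s}}\) with \(a=(1-h)e^{-i(t+h/2)\log 2}\), transferred from \(BMOA(\D)\) by periodicity. You should discard the \(\log\zeta\) candidate, since as written \(\log\zeta(s+h-it_0)\) is singular at \(s=1-h+it_0\in\Czero\); the \(2^{-s}\) test functions already lie in \(\VMAD\) because their Taylor partial sums converge in \(BMOA(\D)\), so no translation or truncation is needed, and \(T_g1=g-g(+\infty)\) supplies both \(g\in\BMAD\) and the boxes with \(h\) bounded below.
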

\end{samepage}

 For a bounded operator on a Banach space \(X\), let \(\|\cdot\|_{e,X}\) and
\(\|\cdot\|_{w,X}\) denote its distances from the compact and weakly compact
operators on \(X\), respectively.  Recall that an operator is completely
continuous if it maps every weakly convergent sequence to a norm convergent
sequence. Our second main result is the following.

\begin{theorem}\label{thm:essential}
Let \(g\in\LMAD\), and let \(X\) be either \(\BMAD\) or
\(\VMAD\). Then
\begin{equation}\label{eq:essential-norm}
\begin{aligned}
 \|T_g\|_{w,X}
 \asymp\|T_g\|_{e,X}&\asymp \operatorname{dist}_{\LMAD}
       (g,LMOA_{\mathcal D,0})\asymp
 \lim_{\delta\rightarrow0}[g]_{\LMAD,\delta}.
\end{aligned}
\end{equation}
The constants are independent of \(g\) and \(X\).
For each such \(X\), the following conditions are equivalent:
\begin{description}
 \setlength{\itemsep}{0pt}
 \setlength{\parsep}{0pt}
 \setlength{\topsep}{2pt}
 \setlength{\partopsep}{0pt}
 \item[(i)] \(T_g:X\to X\) is compact;
 \item[(ii)] \(T_g:X\to X\) is weakly compact;
 \item[(iii)] \(T_g:X\to X\) is completely continuous;
 \item[(iv)] \(g\in LMOA_{\mathcal D,0}\).
\end{description}
\end{theorem}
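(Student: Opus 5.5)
The plan is to prove the chain of estimates
\[
\lim_{\delta\to0}[g]_{\LMAD,\delta}\lesssim \|T_g\|_{w,X}\le \|T_g\|_{e,X}\lesssim \operatorname{dist}_{\LMAD}(g,LMOA_{\mathcal D,0})\lesssim \lim_{\delta\to0}[g]_{\LMAD,\delta}.
\]
The middle inequality is trivial, because compact operators are weakly compact. The equivalences (i)--(iv) then follow. By the chain, (i), (ii) and (iv) are equivalent. Compactness implies complete continuity. For (iii)$\Rightarrow$(iv), I would reuse the test functions from the lower bound below: they form a weakly null sequence whose images stay bounded below in norm by a multiple of $\lim_\delta[g]_{\LMAD,\delta}$.

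\textbf{Upper bounds.} For the third inequality, $g\mapsto T_g$ is linear, and Theorem~\ref{thm:bounded} gives $\|T_{g-g_0}\|\lesssim[g-g_0]_{\LMAD}$. Hence it suffices to know that $T_{g_0}$ is compact on $X$ for every $g_0\in LMOA_{\mathcal D,0}$. Then $\|T_g\|_{e,X}\le\|T_{g-g_0}\|\lesssim\|g-g_0\|_{\LMAD}$.

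Compactness of $T_{g_0}$ I would prove directly. Take a bounded sequence $f_n$ in $\BMAD$ converging to $0$ locally uniformly in $\Czero$ (via Montel plus a normal families argument along $\sigma\to+\infty$). The Carleson-measure description of BMOA norms of $T_{g_0}f_n$ gives the measure $|f_n|^2 d\mu_{g_0}$. Split each box $Q(t,h)$ into the part with $\sigma<\delta$ and the part with $\sigma\ge\delta$. On the thin part, the standard pointwise bound $|f(s)|\lesssim\log(e/\sigma)\|f\|$ near the boundary, combined with the smallness of $[g_0]_{\LMAD,\delta}$, gives a small contribution. On the part $\sigma\ge\delta$ the convergence is locally uniform. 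The non-compact range of $t$ is handled by using the almost periodicity of Dirichlet series in $\tau$, or simply uniform convergence on half-planes $\sigma\ge\delta$.

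For the last inequality I need a $g_0\in LMOA_{\mathcal D,0}$ with $[g-g_0]_{\LMAD}\lesssim\lim_\delta[g]_{\LMAD,\delta}+\varepsilon$. The natural candidate is a horizontal translate $g_\varepsilon(s)=g(s+\varepsilon)$, which stays in $\cD$. Its measure lives on $\sigma\ge\varepsilon$, so $[g_\varepsilon]_{\LMAD,\delta}\to0$, and $g_\varepsilon\in LMOA_{\mathcal D,0}$. Then I estimate $[g-g_\varepsilon]_{\LMAD,\delta_0}$ by splitting boxes of size $h\le\eta$ (controlled by $[g]_{\LMAD,c\eta}$, using that translation essentially preserves the box condition) from boxes of size $h>\eta$ (where $g'-g_\varepsilon'\to0$ as $\varepsilon\to0$ in the relevant averaged sense). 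This translation step is where I expect the main technical work: translation can move mass from boxes of one scale into another, so the comparison must be done with slightly enlarged boxes.

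\textbf{Lower bound.} This is the hardest step, and it must handle weak essential norms and $X=\VMAD$ simultaneously. Fix a box $Q(t_n,h_n)$ with $h_n\to0$ nearly attaining $\lim_\delta[g]_{\LMAD,\delta}$. Build test functions $f_n\in\VMAD$ of logarithmic type, $f_n(s)\approx\log(1/h_n)^{-1}\log^2\!\bigl(1/(s-it_n+h_n)\bigr)$-style functions, normalized in $\BMAD$. These must be realized as Dirichlet series, for instance by composing with Kronecker-type approximations or by using $\zeta$-type functions $\log\zeta(s+h_n)$ shifted in $t$. They should satisfy $|f_n|\gtrsim\log(e/h_n)$ on $Q(t_n,h_n)$, while a subsequence tends to $0$ weakly, for example through a Rosenthal $\ell^1$-type argument or through the construction $f_n=F_n-F_n(\cdot+\text{shift})$. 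For any weakly compact $K$, $\|(T_g-K)f_n\|$ then tends to at least $\liminf\|T_gf_n\|$ along a subsequence, by the Dunford--Pettis-type property of $\VMAD$ ($c_0$-like). I would transfer this property from the classical fact that $VMOA$ has $VMOA^{**}=BMOA$ and contains a complemented copy of $c_0$, following \cite{LMN}. Finally, $\|T_gf_n\|\gtrsim\log(e/h_n)\,(\mu_g(Q)/h_n)^{1/2}$.
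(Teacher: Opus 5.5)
Your overall architecture matches the paper's: the four-term chain, compactness of \(T_{g_0}\) by splitting boxes at height \(\delta\), translates \(g_\varepsilon\) for the distance estimate, and logarithmic test functions for the lower bound. However, the lower bound for the weak essential norm rests on a false step. You assert that every weakly compact \(K\) satisfies \(\|Kf_n\|\to0\) because of a ``Dunford--Pettis-type property of \(\VMAD\)''. The space \(\VMAD\) does not have the Dunford--Pettis property. The lacunary series \(\sum_k c_k2^{-2^ks}\) form a complemented copy of \(\ell^2\) in \(\VMAD\): Lemma~\ref{lem:periodic-composition} and Paley's theorem give \(\|\sum_kc_k2^{-2^ks}\|_{\BMAD}\asymp\|(c_k)\|_{\ell^2}\), and the coefficient projection is bounded since \(\|f\|_{\mathcal H^2}\lesssim\|f\|_{\BMAD}\) on \(\VMAD\). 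That projection is weakly compact but fixes the weakly null sequence \((2^{-2^ks})_k\).

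So weak nullness of your test functions does not give \(\|Kf_n\|\to0\), whether obtained via Rosenthal or otherwise. Nor do general facts such as \(VMOA^{**}=BMOA\) or that \(VMOA\) contains a complemented \(c_0\). What you need is that the \emph{specific} test sequence spans \(c_0\). If \(\iota:c_0\to X\), \(e_n\mapsto u_n\), is an isomorphic embedding, then \(K\circ\iota\) is weakly compact on \(c_0\). It is therefore completely continuous by the Dunford--Pettis property of \(c_0\), so \(\|Ku_n\|\to0\), and weak nullness for (iii)\(\Rightarrow\)(iv) comes for free.

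The paper obtains \(\iota\) from Lemma~\ref{lem:c0}, transferred by Lemma~\ref{lem:periodic-composition}: \(VMOA(\D)\) functions with BMO seminorms bounded above and below that tend to \(0\) in \(H^2(\D)\) have a \(c_0\)-subsequence. Since \(H_{a_n}\) does not tend to \(0\) in \(H^2(\D)\), the paper applies this to \(u_n=f_{n+1}-f_n\). It must then choose the subsequence so that \(f_n\) is negligible in \(L^2(\mu_g)\) on \(Q(t_{n+1},h_{n+1})\); your lower bound on \(\|T_gf_n\|\) does not pass to such differences without this step. Your \(\log^2/\log\) functions, realized as \(G_{a_n}(2^{-s})\) with \(G_a(z)=\log^2\frac{1}{1-\bar a z}/\log\frac{1}{1-|a|}\), do tend to \(0\) in \(H^2(\D)\). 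Lemma~\ref{lem:c0} would therefore apply to them directly, without differences. Composition with \(2^{-s}\) is the realization that makes this work, whereas \(\log\zeta(s+h_n)\) is not even holomorphic in \(\Czero\).

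Second, in the compactness step the thin part is not controlled by the pointwise bound \(|f(s)|\lesssim\log(e/\sigma)\|f\|_{\BMAD}\) together with smallness of \([g_0]_{\LMAD,\delta}\). Split \(Q(t,h)\) into layers \(\sigma\approx2^{-k}h\) and cover each by \(2^k\) boxes of side \(2^{-k}h\). The box condition then only shows that each layer contributes \(\lesssim[g_0]_{\LMAD,\delta}^2h\), and the sum over \(k\) diverges. Indeed, a measure \(w(\sigma)\,\dd\sigma\,\dd\tau\) with \(\int_0^r w=\varepsilon\log^{-2}(e/r)\) satisfies the box condition with arbitrarily small constant, yet gives \(\int_{Q(t,h)}\log^2(e/\sigma)\,\dd\mu=\infty\).

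You must use the pointwise bound only at the top point \(h+it\), and control the oscillation \(f-f(h+it)\) by a Carleson embedding for the rescaled measure. This is Lemma~\ref{lem:local-embedding}, which yields \eqref{eq:strip-embedding}. Alternatively, prove compactness only for the translates \(g_\varepsilon\), whose derivatives are bounded so your pointwise argument does work. Then deduce (iv)\(\Rightarrow\)(i) from \(\|T_g\|_{e,X}\lesssim\lim_{\delta\to0}[g]_{\LMAD,\delta}\).
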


The compactness characterization in Theorem~\ref{thm:essential} yields the
following consequence for Dirichlet polynomial symbols.

\begin{corollary}\label{cor:polynomial}
Every Dirichlet polynomial symbol induces a compact Volterra operator on
both \(\BMAD\) and \(\VMAD\).
\end{corollary}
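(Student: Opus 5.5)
By Theorem~\ref{thm:essential}, it suffices to show that a Dirichlet polynomial \(g(s)=\sum_{n=1}^N b_n n^{-s}\) belongs to \(LMOA_{\mathcal D,0}\). That is, we need \(g\in\BMAD\) and
\[
\lim_{\delta\to0}[g]_{\LMAD,\delta}=0 .
\]
The whole point is that \(g'\) is a bounded function on \(\Czero\). Writing
\[
g'(s)=-\sum_{n=2}^N b_n(\log n)n^{-s},
\]
we get \(|g'(s)|\le M:=\sum_{n\le N}|b_n|\log n\) for all \(\Real s>0\).

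First I check that \(g\in\BMAD\). The function \(g\) is bounded on \(\Czero\) by \(\sum|b_n|\), so \eqref{BMO} holds trivially. Its boundary function \(g(it)\) is a bounded trigonometric-type polynomial, hence lies in \(L^\infty\subset BMO\). Thus \(g\in BMOA(\Czero)\cap\cD=\BMAD\).

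Next comes the Carleson-box estimate. For \(t\in\mathbb R\) and \(0<h\le\delta\le1\),
\[
\mu_g(Q(t,h))=\int_t^{t+h}\!\int_0^h|g'(\sigma+i\tau)|^2\sigma\,d\sigma\,d\tau\le M^2\,h\cdot\frac{h^2}{2},
\]
so \(\mu_g(Q(t,h))/h\le M^2h^2/2\). It follows that
\[
[g]_{\LMAD,\delta}\le \sup_{0<h\le\delta}\frac{M}{\sqrt2}\,h\log(e/h)=\frac{M}{\sqrt2}\,\delta\log(e/\delta),
\]
using that \(h\log(e/h)\) is increasing on \((0,1]\), since its derivative is \(\log(1/h)\ge0\). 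The right-hand side tends to \(0\) as \(\delta\to0\), and at \(\delta=1\) it gives \([g]_{\LMAD}<\infty\). Hence \(g\in LMOA_{\mathcal D,0}\).

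Applying the equivalence (i)\(\Leftrightarrow\)(iv) of Theorem~\ref{thm:essential} with \(X=\BMAD\) and with \(X=\VMAD\) then yields compactness on both spaces. No step here is a real obstacle. The only points to verify carefully are the uniform bound on \(g'\) in the closed half-plane, which is immediate for finite sums, and the membership \(g\in\BMAD\) required in the definition of \(\LMAD\).
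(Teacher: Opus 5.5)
Your proposal is correct and follows the paper's proof: bound $g'$ uniformly on $\Czero$ and estimate $\mu_g(Q(t,h))\lesssim \|g'\|_{\mathcal H^\infty}^2h^3$. This gives $[g]_{\LMAD,\delta}\lesssim \|g'\|_{\mathcal H^\infty}\,\delta\log(e/\delta)\to0$, so $g\in LMOA_{\mathcal D,0}$, and Theorem~\ref{thm:essential} then yields compactness on both spaces.
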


Motivated by Corollary~\ref{cor:polynomial}, we next ask whether every
Dirichlet polynomial symbol also induces a compact Volterra operator on
Bergman spaces of Dirichlet series.  This family of spaces was introduced in
\cite{BL} and further studied in  \cite{GomezCabello}.  We address the question at the endpoint \(p=1\) within the
framework of \cite{GomezCabello}.

Let \(\mu\) be an admissible measure, that is, a probability measure on
\((0,\infty)\) such that \(0\in\operatorname{supp}(\mu)\).  For
\(0<p<\infty\) and a Dirichlet polynomial \(P\), write
\(P_\sigma(s)=P(s+\sigma), \sigma\geq 0\) and set
\[
\|P\|_{\mathcal A_\mu^p}^p
=\int_0^\infty \|P_\sigma\|_{\mathcal H^p}^p\,\dd\mu(\sigma),
\]
where \(\mathcal H^p\) denotes the Hardy space of Dirichlet series.  The
space \(\mathcal A_\mu^p\) is defined as the completion of the Dirichlet
polynomials with respect to this norm, or quasi-norm when \(0<p<1\). 
G\'{o}mez-Cabello, Lef\`{e}vre and Queff\'{e}lec proved that every Dirichlet polynomial symbol induces a compact Volterra operator on
\(\mathcal A_\mu^p\), for every admissible measure \(\mu\), when
\(1<p<\infty\)
\cite[Proposition~7.12]{GomezCabello}.
For the McCarthy-Bergman spaces \(\mathcal A_\alpha^p\), Fu, Guo and Yan obtained compactness for all \(p\geq1\) under a little Bloch condition
\cite[Theorem~5.10(a)]{FuGuoYan}. Thus, the endpoint \(p=1\) was already known for this particular family of measures. Their proof is based on the compactness of horizontal translations and of the primitive operator
\cite[Lemma~5.9]{FuGuoYan}.

We prove that the polynomial symbol compactness result at \(p=1\) holds for every admissible measure \(\mu\). Our argument adapts the method of Fu, Guo and Yan to the general spaces \(\mathcal A_\mu^1\). The result therefore extends the theorem of G\'{o}mez-Cabello, Lef\`{e}vre and Queff\'{e}lec to the endpoint \(p=1\), while extending the polynomial symbol consequence of the endpoint result of Fu, Guo and Yan from the McCarthy-Bergman family to arbitrary admissible measures.
 See also the related results of Chen and Wang \cite{ChenWang}.  The following theorem settles the endpoint \(p=1\) for arbitrary admissible \(\mu\).

\begin{theorem}\label{thm:bergman-polynomial}
Let \(\mu\) be an admissible measure.  Every Dirichlet polynomial symbol
\(g\) induces a compact Volterra operator
$T_g$ on $\mathcal A_\mu^1.$
\end{theorem}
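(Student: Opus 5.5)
Write \(g(s)=\sum_{n=1}^N b_n n^{-s}\), so that \(g'(s)=-\sum_{n=2}^N b_n(\log n)n^{-s}\). Since \(T_g\) is linear in \(g\), it suffices to treat a single monomial \(g(s)=n^{-s}\) with \(n\ge2\). For a Dirichlet polynomial \(f=\sum a_m m^{-s}\) a direct computation gives
\[
T_gf(s)=-\int_s^{+\infty}f(w)g'(w)\dd w
=-\sum_m a_m\frac{\log n}{\log(mn)}(mn)^{-s}.
\]
Following Fu, Guo and Yan, I would factor this as \(T_g=-\log n\cdot P\circ M_n\). Here \(M_n f=n^{-s}f\) is multiplication by \(n^{-s}\), and \(Pf(s)=\int_s^{+\infty}f(w)\dd w=\sum_{m\ge2}a_m(\log m)^{-1}m^{-s}\) is the primitive operator, applied to series with \(a_1=0\). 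Since \(|n^{-s}|\le1\) on \(\Czero\), we have \(\|(n^{-s}f)_\sigma\|_{\mathcal H^1}\le\|f_\sigma\|_{\mathcal H^1}\). Hence \(M_n\) is bounded on \(\mathcal A^1_\mu\), and it maps into the subspace of series with vanishing constant term. The problem therefore reduces to showing that \(P\) is compact on that subspace of \(\mathcal A^1_\mu\).

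For compactness of \(P\), I would split \(P=P_\varepsilon^{(1)}+P_\varepsilon^{(2)}\). The first piece \(P_\varepsilon^{(1)}\) is the truncation to \(m\le M\). It is finite rank, and it is bounded on \(\mathcal A^1_\mu\) because each coefficient functional \(f\mapsto a_m\) is bounded: \(|a_m|m^{-\sigma}\le\|f_\sigma\|_{\mathcal H^1}\), and integrating in \(\mu\) gives \(|a_m|\int m^{-\sigma}\dd\mu(\sigma)\le\|f\|_{\mathcal A^1_\mu}\), where the integral is positive. The tail piece must have small norm. To control it, I would write \(P\) through horizontal translations:
\[
Pf=\int_0^\infty f_u\,\dd u,\qquad f_u=f(\cdot+u),
\]
which holds since \(\int_0^\infty m^{-u}\dd u=1/\log m\). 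Minkowski's inequality in \(\mathcal A^1_\mu\), valid since \(p=1\), together with the contractivity \(\|(f_u)_\sigma\|_{\mathcal H^1}\le\|f_\sigma\|_{\mathcal H^1}\) of translations on \(\mathcal H^1\), gives
\[
\Bigl\|\int_\delta^\infty f_u\,\dd u\Bigr\|\le\int_\delta^\infty\|f_u\|_{\mathcal A^1_\mu}\dd u.
\]
For functions with \(a_1=0\) one has exponential decay \(\|f_u\|_{\mathcal H^1}\le 2^{-u}\|f\|_{\mathcal H^1}\). This follows by writing \(f=2^{-s}\cdot\)(something) fibrewise on the polytorus, or from the known result that \(f\mapsto f_u\) restricted to \(a_1=0\) has norm \(2^{-u}\) on \(\mathcal H^p\). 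Hence \(\int_0^\infty\|f_u\|\dd u\) converges, and \(P\) is bounded.

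For compactness, I would split \(P=\int_0^{\delta}f_u\dd u+\int_\delta^\infty f_u\dd u\). The second term is a norm limit of Riemann sums of the operators \(f\mapsto f_u\), \(u\ge\delta\), so it suffices to show that each translation \(f\mapsto f_u\) with \(u>0\) is compact on \(\mathcal A^1_\mu\). This is the main obstacle, since \(\mathcal H^1\) has no orthogonal structure. I would handle it by approximating \(f_u\) by \(\sum_{m\le M}a_m m^{-u}m^{-s}\). The error is \((f-S_Mf)_u\), and I would bound it using the Bohr-lift estimate: after factoring out primes, \(|a_m|\le\|f\|_{\mathcal H^1}\), and on the subspace generated by integers with a prime factor \(>y\) or with many prime factors, translation by \(u\) has small norm. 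Concretely, I would bound \(\|f_u\|_{\mathcal H^1}\le\|f_{u/2}\|_{\mathcal H^2}\) through the \(\mathcal H^1\to\mathcal H^2\) contractivity of \(f\mapsto f_{u/2}\). This inequality is Bayart's hypercontractivity for the translation by \(u/2\ge\frac12\log\sqrt2/\log 2\)-type thresholds, iterated as needed. Once in \(\mathcal H^2\), the diagonal operator with entries \(m^{-u/2}\to0\) is compact, and the composition is compact. The first term has norm at most \(\int_0^\delta\|f_u\|\dd u\le\delta\|f\|\), which tends to \(0\) as \(\delta\to0\). Finally, these \(\mathcal H^1\) estimates integrate against \(\mu\) uniformly in \(\sigma\), giving compactness on \(\mathcal A^1_\mu\).
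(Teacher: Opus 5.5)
Your architecture matches the paper's. You write \(T_{n^{-s}}=(\log n)\,JM_n\) with \(J=\int_0^\infty P_t\,\dd t\) on series with \(a_1=0\), get boundedness of \(J\) from \(\|P_tf\|\le 2^{-t}\|f\|\), and reduce to compactness of \(P_u\), \(u>0\), on \(\mathcal A^1_\mu\). Three small corrections at this level:
\begin{enumerate}
\item The sign is \(+\), since \(T_{n^{-s}}1=n^{-s}\).
\item The factorization \(f=2^{-s}\cdot(\cdots)\) is not available for every \(f\) with \(a_1=0\) (take \(f=3^{-s}\)). Rely on Lemma~5.5 of G\'omez-Cabello--Lef\`evre--Queff\'elec, as the paper does.
\item Operator-norm convergence of Riemann sums for \(\int_\delta^\infty P_t\,\dd t\) would need norm continuity of \(t\mapsto P_t\), which you have not shown. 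The identity \(\int_\delta^\infty P_t\,\dd t=JP_\delta\), a bounded operator times a compact one, avoids this.
\end{enumerate}
The real difference is the key lemma. The paper gets \(\|P_u-P_uS_N\|_{\mathcal A^1_\mu\to\mathcal A^1_\mu}\lesssim(N+1)^{-u}\log N/\log\log N\) from the Bondarenko--Brevig--Saksman--Seip bound on \(\|S_N\|_{\mathcal H^1\to\mathcal H^1}\). You instead factor \(P_u\) through \(\mathcal H^2\). That is a viable and more elementary route, but as written it has a gap.

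The contractivity \(\|f_v\|_{\mathcal H^2}\le\|f\|_{\mathcal H^1}\) you invoke holds only when \(p^{-v}\le 2^{-1/2}\) for every prime, i.e.\ \(v\ge 1/2\) (so \(u\ge 1\) in your notation). For \(f=1+\varepsilon 2^{-s}\) one has \(\|f\|_{\mathcal H^1}^2=1+\varepsilon^2/2+O(\varepsilon^4)\), while \(\|f_v\|_{\mathcal H^2}^2=1+4^{-v}\varepsilon^2\). ``Iterating'' through \(\mathcal H^1\to\mathcal H^{q_1}\to\cdots\to\mathcal H^2\) does not lower the threshold: the step conditions \(2^{-v_i}\le (q_{i-1}/q_i)^{1/2}\) multiply out to \(2^{-v}\le 2^{-1/2}\) again. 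Your \(\delta\)-splitting needs \(P_u\) compact for arbitrarily small \(u\), so this is a real hole.

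The missing idea is to settle for boundedness rather than contractivity. For fixed \(v>0\), only finitely many primes satisfy \(p^{-v}>2^{-1/2}\). In those variables of the Bohr lift, the dilation is convolution with a Poisson kernel bounded by \(C_v=\prod_{p^{-v}>2^{-1/2}}\frac{1+p^{-v}}{1-p^{-v}}\). In the remaining variables hypercontractivity applies. Minkowski's integral inequality combines the two and gives \(\|f_v\|_{\mathcal H^2}\le C_v\|f\|_{\mathcal H^1}\) for every \(v>0\).

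There is a second issue: compactness on \(\mathcal H^1\) does not by itself pass to \(\mathcal A^1_\mu\). What passes is an operator-norm estimate for an approximant that commutes with \(f\mapsto f_\sigma\). Since \(S_M\) is an orthogonal projection on \(\mathcal H^2\) commuting with translations,
\[
\|P_u(I-S_M)f\|_{\mathcal H^1}\le\|P_{u/2}(I-S_M)P_{u/2}f\|_{\mathcal H^2}\le C_{u/2}(M+1)^{-u/2}\|f\|_{\mathcal H^1}.
\]
Applying this to \(f_\sigma\) and integrating in \(\mu\) gives \(\|P_u-P_uS_M\|_{\mathcal A^1_\mu\to\mathcal A^1_\mu}\le C_{u/2}(M+1)^{-u/2}\). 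Here \(P_uS_M\) is bounded and of finite rank by your bound on the coefficient functionals.

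With these two repairs your proof works. It trades the deep partial-sum theorem for a constant \(C_{u/2}\) that blows up as \(u\to 0\), which is harmless since each \(u\) is fixed.
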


The paper is organized as follows.  Section~2 collects the preliminary
lemmas.  Section~3 proves the main results on \(\BMAD\) and \(\VMAD\), while
Section~4 establishes the endpoint compactness result on
\(\mathcal A_\mu^1\).  Throughout the paper, \(A\lesssim B\) means that
\(A\leq CB\) for some constant \(C>0\), and \(A\asymp B\) means that both
\(A\lesssim B\) and \(B\lesssim A\) hold.

\section{Preliminaries}
We begin by collecting the basic properties of spaces and several lemmas needed in the proofs.
Let \(\varphi(s)=(1-s)/(1+s)\), which maps \(\Czero\) conformally onto
\(\D\).  The conformally invariant Hardy space
\(H_{\mathrm{i}}^2(\Czero)\) consists of
those analytic functions \(f\) on \(\Czero\) for which
\(f\circ\varphi^{-1}\in H^2(\D)\), with
\[
 \|f\|_{H_{\mathrm{i}}^2(\Czero)}^2
 =\|f\circ\varphi^{-1}\|_{H^2(\D)}^2
 =\frac1\pi\int_{\mathbb R}|f(i\tau)|^2\frac{\dd\tau}{1+\tau^2}.
\]

For \(0<p<\infty\), the Hardy space of Dirichlet series \(\mathcal H^p\)
is defined as the completion of the Dirichlet polynomials
$P(s)=\sum_{n=1}^N a_n n^{-s}$ with respect to
\[
\|P\|_{\mathcal H^p}^p
=
\lim_{T\to\infty}\frac1{2T}
\int_{-T}^{T}|P(it)|^p\,\dd t.
\]
We write \(\mathcal H^\infty\) for the space of bounded Dirichlet series
on \(\Czero\), equipped with the norm
\[
\|f\|_{\mathcal H^\infty}
=
\sup_{\Real s>0}|f(s)|.
\]
See \cite{Bayart2002,HPK} for further background.

The following lemma can be found in \cite[Lemmas 3.4 and 3.6]{KQSS}.
Indeed, the inclusion
\(BMOA\subset\operatorname{Bloch}\) and Lemma~3.6 in \cite{KQSS} yield the
pointwise estimate, while the assertions on the abscissae follow from Bohr's theorem.  Here
\(\sigma_u(f)\) and \(\sigma_a(f)\) denote the abscissae of uniform and
absolute convergence, respectively.

\begin{lemma}\label{lem:box-growth}
Let \(f\in H_{\mathrm{i}}^2(\Czero)\cap\cD\). Then
\begin{description}
	\item[(i)]  $f$ belongs to $\BMAD$ if and only if $$ \sup_{t\in\mathbb R,\,0<h\leq1}\frac1h
	\int_{Q(t,h)}|f'(s)|^2\sigma\dd\sigma\,\dd\tau<\infty,$$ and in this case,
	\begin{align}\label{eq:bmo-carleson}
	\|f\|_{\operatorname{BMO}(\Czero)}^2
		\asymp
		\sup_{t\in\mathbb R,\,0<h\leq1}\frac1h
		\int_{Q(t,h)}|f'(s)|^2\sigma\dd\sigma\,\dd\tau.
	\end{align}
	\item[(ii)] If \(f\in\BMAD\), then
	for \(s=\sigma+i\tau\in\Czero\),
	\begin{equation}\label{eq:growth}
			|f(\sigma+i\tau)-f(+\infty)|
			\lesssim \bigl(\log^+(1/\sigma)+2^{-\sigma}\bigr)\|f\|_{\BMAD},
	\end{equation}
	and \(\sigma_u(f)\leq0\), \(\sigma_a(f)\leq\frac12\).
\end{description}
\end{lemma}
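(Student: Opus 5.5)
The plan is to transfer both parts of Lemma~\ref{lem:box-growth} from the unit disc via the Cayley map \(\varphi\), and then use Dirichlet series structure only to pass from local boxes (\(h\leq1\)) to all scales and to control the abscissae.

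For (i), set \(F=f\circ\varphi^{-1}\). Since \(f\in H_{\mathrm i}^2(\Czero)\), we have \(F\in H^2(\D)\). Conformal invariance of BMO under Möbius maps identifies the BMO norm of \(f\) on the line with the BMO norm of \(F\) on the circle, up to constants. The Fefferman--Stein characterization in the upper half-plane (rotated) then gives
\[
\|f\|_{\operatorname{BMO}(\Czero)}^2\asymp \sup_{t\in\mathbb R,\,h>0}\frac1h\int_{Q(t,h)}|f'(s)|^2\sigma\dd\sigma\,\dd\tau ,
\]
with the supremum taken over all \(h>0\). It therefore remains to show that boxes with \(h>1\) add nothing beyond a constant, and this is where the Dirichlet structure enters.

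\begin{itemize}
\item The map \(\tau\mapsto f(\sigma+i\tau)\) is almost periodic, and \(f'\) is a Dirichlet series with no constant term. Hence \(|f'(\sigma+i\tau)|\) decays like \(2^{-\sigma}\) as \(\sigma\to\infty\), uniformly in \(\tau\) once \(\sigma\) is large.
\item A box \(Q(t,h)\) with \(h>1\) can be covered by roughly \(h\) vertical strips of width \(1\). The part with \(\sigma\leq1\) splits into \(\lceil h\rceil\) unit boxes, each carrying at most the local supremum, so after dividing by \(h\) it is controlled by that supremum.
\item The part with \(\sigma>1\) is handled by a subharmonicity/mean-value bound. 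This bounds \(|f'(\sigma+i\tau)|^2\) for \(\sigma\geq1\) by averages over unit boxes near \(\sigma\approx1/2\), with exponential decay in \(\sigma\), so it too is controlled by the local supremum.
\end{itemize}

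This yields (i) in both directions, together with the equivalence \eqref{eq:bmo-carleson}. The exponential-decay estimate for large \(\sigma\) is the main obstacle, and I would take it essentially from the cited Lemma~3.4 of \cite{KQSS}.

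For the growth bound in (ii), I would use \(BMOA\subset\mathrm{Bloch}\) in the half-plane, which gives \(|f'(s)|\lesssim\|f\|_{\BMAD}/\sigma\). Integrating horizontally from \(\sigma\) to \(1\) gives \(|f(\sigma+i\tau)-f(1+i\tau)|\lesssim\log^+(1/\sigma)\|f\|\). For \(\sigma\geq1\), I would write \(f-a_1=\sum_{n\geq2}a_nn^{-s}\) and use the Bloch-type bound on \(f'\) combined with the decay from the Dirichlet structure. This gives \(|f(s)-a_1|\lesssim2^{-\sigma}\|f\|\) for \(\sigma\geq1\), by integrating \(f'\) from \(s\) to \(+\infty\) using \(|f'(s)|\lesssim 2^{-\sigma}\|f\|\) there, as in \cite[Lemma~3.6]{KQSS}.

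For the abscissae, the bound \eqref{eq:growth} shows that \(f\) is bounded on every half-plane \(\Real s\geq\varepsilon>0\). By Bohr's theorem, \(f\) then converges uniformly on \(\Real s\geq\varepsilon+\eta\) for every \(\eta>0\), so \(\sigma_u(f)\leq0\). Finally, Bohr's inequality \(\sigma_a\leq\sigma_u+\tfrac12\) gives \(\sigma_a(f)\leq\tfrac12\).
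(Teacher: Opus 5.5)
Your proposal is correct and follows essentially the same route as the paper. The paper gives no argument of its own: it cites \cite[Lemma~3.4]{KQSS} for (i), uses the inclusion $BMOA\subset\operatorname{Bloch}$ together with \cite[Lemma~3.6]{KQSS} for the growth estimate, and takes the abscissae from Bohr's theorem, and your sketch fills in the standard ingredients behind those same citations. One point to make explicit if you write it out: in the direction ``local Carleson bound $\Rightarrow$ BMO'' the Bloch estimate is not yet available, so the a priori growth control on $f'$ in $\Real s\geq 1/2$, needed to apply the maximum principle to $2^sf'(s)$, must come from elsewhere, for instance from the polynomial growth in vertical strips implied by $f\in H_{\mathrm i}^2(\Czero)$ together with absolute convergence far to the right.
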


The following lemma will also be used in the compactness proof.

\begin{lemma}\label{lem:local-embedding}
Let \(g\in\LMAD\) and \(0<\delta\leq1\). Then
$$
 \int_{Q(t,h)}|f(s)|^2\dd\mu_g(s)
 \leq C [g]_{\LMAD,\delta}^2h\|f\|_{\BMAD}^2
$$
for every \(f\in\BMAD\), \(t\in\mathbb R\) and \(0<h\leq\delta\). 
The constant is independent of \(\delta\).
\end{lemma}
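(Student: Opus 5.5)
The approach is to split \(f\) on \(Q(t,h)\) into a constant part and an oscillating part, in the spirit of the classical proof that logarithmic Carleson measures are exactly the multipliers of \(BMOA\) into Carleson measures. Fix \(f\in\BMAD\), \(t\in\mathbb R\) and \(0<h\leq\delta\). Let \(s_0=h+i(t+h/2)\) be the point above the top edge of the box, and write
\[
\int_{Q(t,h)}|f|^2\dd\mu_g\leq 2|f(s_0)|^2\mu_g(Q(t,h))+2\int_{Q(t,h)}|f(s)-f(s_0)|^2\dd\mu_g(s).
\]
For the first term I would use Lemma~\ref{lem:box-growth}(ii). Since \(h\leq1\), the bound there gives \(|f(s_0)|\lesssim(\log(1/h)+1)\|f\|_{\BMAD}\lesssim\log(e/h)\|f\|_{\BMAD}\), where the term \(|f(+\infty)|\) is absorbed into the norm. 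By the definition of \([g]_{\LMAD,\delta}\) we also have \(\mu_g(Q(t,h))\leq [g]_{\LMAD,\delta}^2h/\log^2(e/h)\). The logarithms cancel, and the first term is \(\lesssim[g]_{\LMAD,\delta}^2h\|f\|^2_{\BMAD}\).

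For the second term, the logarithmic weight gives \(\mu_g(Q(t',h'))\leq[g]_{\LMAD,\delta}^2h'\) for all \(h'\leq\delta\). Hence \(\mu_g\) restricted to a neighbourhood of \(Q(t,h)\) is a Carleson measure with constant \([g]_{\LMAD,\delta}^2\). I would transfer to a localized Hardy space. Set \(F(s)=(f(s)-f(s_0))\,\chi(s)\) with \(\chi(s)=\bigl(h/(s-\bar s_0')\bigr)^{2}\) for a reflected point \(s_0'\) near \(s_0\), say \(\bar s_0'=-h+i(t+h/2)\). Then \(|\chi|\asymp1\) on \(Q(t,h)\) and \(\chi\) decays like \(h^2/|s-\bar s_0'|^2\) away from the box. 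Standard BMO estimates (the conformally invariant \(H^2\) bound around \(s_0\), i.e.\ \(\int_{\mathbb R}|f(i\tau)-f(s_0)|^2\,P_{s_0}(\tau)\dd\tau\lesssim\|f\|_{\operatorname{BMO}}^2\) with Poisson kernel \(P_{s_0}\)) then give \(\|F\|_{H^2(\Czero)}^2\lesssim h\|f\|_{\operatorname{BMO}}^2\). Carleson's embedding applied to the piece of \(\mu_g\) on the top half of a slightly enlarged box yields
\[
\int_{Q(t,h)}|f-f(s_0)|^2\dd\mu_g\lesssim\int|F|^2\dd\mu_g|_{Q}\lesssim[g]_{\LMAD,\delta}^2\|F\|^2_{H^2}\lesssim[g]_{\LMAD,\delta}^2h\|f\|^2_{\BMAD}.
\]

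The main obstacle is the Carleson embedding step. The Carleson constant of \(\mu_g|_{Q}\) must be controlled by \([g]_{\LMAD,\delta}\) alone, uniformly in \(\delta\). Since \(\mu_g|_{Q(t,h)}\) is supported in a box of side \(h\leq\delta\), testing it on boxes of side larger than \(h\) gives mass at most \(\mu_g(Q(t,h))\leq[g]_{\LMAD,\delta}^2 h\), which is fine. Boxes of side \(h'\leq h\) are covered directly by the definition. Boxes with \(h'\) larger than \(\delta\) but meeting \(Q(t,h)\) reduce to covering \(Q(t,h)\) by at most two boxes of side \(h\). So the constant is \(\lesssim[g]_{\LMAD,\delta}^2\), independent of \(\delta\), and the Carleson embedding theorem for \(H^2(\Czero)\) has an absolute constant. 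Combining the two terms gives the lemma with \(C\) independent of \(\delta\).
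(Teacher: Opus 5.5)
Your proposal is correct and is essentially the paper's argument. It uses the same split into \(f\) evaluated at a point of height \(h\) above the box plus the oscillation, and the same cancellation of \(\log(e/h)\) between Lemma~\ref{lem:box-growth}(ii) and the definition of \([g]_{\LMAD,\delta}\). It also applies a Carleson embedding, with constant \(\lesssim[g]_{\LMAD,\delta}^2\) independent of \(\delta\), to a localized \(H^2(\Czero)\) function whose norm is controlled by the Poisson characterization of BMO. The only difference is cosmetic: the paper rescales \(Q(t,h)\) to the unit box via \(s=hw+it\) and localizes with \(1/(1+w)\), which after undoing the dilation is a factor of the same form as your \(h/(s-\bar s_0')\), so the square in your \(\chi\) is harmless but unnecessary.
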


\begin{proof}
Fix \(Q:=Q(t,h)\), \(0<h\leq\delta\) and put \(K:=Q(0,1)\).
 For every Borel set \(E\subset\Czero\), define
\[
\nu(E)=\frac1h\mu_g\bigl(\{hw+it:w\in E\cap K\}\bigr).
\]
For \(x\in\mathbb R\) and \(0<r\leq1\), we have 
\[
 \nu(Q(x,r))
 \leq [g]_{\LMAD,\delta}^2\frac{r}{\log^2(e/(hr))}
 \leq [g]_{\LMAD,\delta}^2r
\]
and
\(
 \nu(K)\leq [g]_{\LMAD,\delta}^2.
\)
If \(r>1\), then, since \(\nu\) is supported on \(K\),
\[
 \nu(Q(x,r))\leq\nu(K)\leq [g]_{\LMAD,\delta}^2
 \leq r[g]_{\LMAD,\delta}^2.
\]
Thus \(\nu\) is a Carleson measure on \(\Czero\), with Carleson constant at
most \(C[g]_{\LMAD,\delta}^2\).

Set
$F(w)=f(hw+it)-f(h+it)$.
The translation and dilation invariance of the \(BMO\) seminorm
\cite[Chapter~VI, Section~1]{Garnett} gives
\(\|F\|_{\operatorname{BMO}(\Czero)}=\|f\|_{\operatorname{BMO}(\Czero)}\).
By the Poisson characterization of \(BMOA(\Czero)\)
\cite[Chapter~VI, Corollary~2.4]{Garnett}, and since \(F(1)=0\), we obtain
\begin{equation}\label{eq:weighted-bmo}
 \int_{\mathbb R}|F(iy)|^2\frac{\dd y}{1+y^2}
 \leq C\|F\|_{\operatorname{BMO}(\Czero)}^2.
\end{equation}
Put \(G(w)=F(w)/(1+w)\).  By
\eqref{eq:weighted-bmo}, we get \(G\in H^2(\Czero)\) and
$
 \|G\|_{H^2(\Czero)}^2 \leq C\|f\|_{\operatorname{BMO}(\Czero)}^2.
$
Since \(|1+w|\leq C\) on \(K\),
\cite[Chapter~II, Theorem~3.9]{Garnett} yields
\[
 \int_K|F(w)|^2\dd\nu(w)
 \leq C[g]_{\LMAD,\delta}^2
 \|f\|_{\operatorname{BMO}(\Czero)}^2.
\]
Therefore, making the change of variables $s=hw+it$ yields
\begin{align}\label{eq:local-oscillation}
 \int_Q|f(s)-f(h+it)|^2\dd\mu_g(s)
 \leq C[g]_{\LMAD,\delta}^2h
 \|f\|_{\operatorname{BMO}(\Czero)}^2.
\end{align}
Finally, \eqref{eq:growth} and the definition of
\([g]_{\LMAD,\delta}\) give
\[
 |f(h+it)|\leq C\log(e/h)\|f\|_{\BMAD},
\]
and
\[
 \mu_g(Q)\leq [g]_{\LMAD,\delta}^2
 \frac{h}{\log^2(e/h)}.
\]
Thus
\[
 |f(h+it)|^2\mu_g(Q)
 \leq C[g]_{\LMAD,\delta}^2h\|f\|_{\BMAD}^2.
\]
Together with \eqref{eq:local-oscillation}, this yields the desired result.
\end{proof}

We also need the following lemma.

\begin{lemma}\label{lem:distance}
For every \(g\in\LMAD\), one has
\begin{equation}\label{eq:distance}
 \operatorname{dist}_{\LMAD}(g,LMOA_{\mathcal D,0})
 \asymp \lim_{\delta\rightarrow0}[g]_{\LMAD,\delta}.
\end{equation}
\end{lemma}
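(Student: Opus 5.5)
The argument splits into the two inequalities. Write \(\ell(g):=\lim_{\delta\to0}[g]_{\LMAD,\delta}\). This limit exists because \(\delta\mapsto[g]_{\LMAD,\delta}\) is nondecreasing.

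\emph{Lower bound.} Let \(h\in LMOA_{\mathcal D,0}\). The set function \(\mu_g\) comes from \(|g'|^2\), so \(\bigl(\mu_g(Q)\bigr)^{1/2}\) is the \(L^2(\sigma\,d\sigma\,d\tau)\) norm of \(g'\) on \(Q\). Minkowski's inequality then gives
\[
[g]_{\LMAD,\delta}\le [g-h]_{\LMAD,\delta}+[h]_{\LMAD,\delta}\le [g-h]_{\LMAD}+[h]_{\LMAD,\delta}.
\]
Letting \(\delta\to0\) yields \(\ell(g)\le [g-h]_{\LMAD}\le\|g-h\|_{\LMAD}\). Taking the infimum over \(h\) gives \(\ell(g)\le \operatorname{dist}_{\LMAD}(g,LMOA_{\mathcal D,0})\).

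\emph{Upper bound.} We approximate \(g\) by the horizontal translates \(g_\varepsilon(s)=g(s+\varepsilon)\), \(\varepsilon>0\). These are again Dirichlet series with the same constant term, and \(g_\varepsilon'=g'(\cdot+\varepsilon)\).

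First we show \(g_\varepsilon\in LMOA_{\mathcal D,0}\). By Lemma~\ref{lem:box-growth}(ii) and the Bloch (Cauchy) estimate, \(|g'(s)|\lesssim \|g\|_{\BMAD}/\sigma\) for \(\sigma\le1\). Together with boundedness of \(g'\) on \(\Real s\ge1\), which follows from absolute convergence, this gives \(\sup_{\Real s\geq\varepsilon}|g'(s)|\le C_\varepsilon\). Hence, for \(h\le\delta\),
\[
\mu_{g_\varepsilon}(Q(t,h))\le C_\varepsilon^2 h^3,
\qquad\text{so}\qquad
\log(e/h)\bigl(\mu_{g_\varepsilon}(Q)/h\bigr)^{1/2}\le C_\varepsilon\, h\log(e/h)\to0.
\]
We also need \(g_\varepsilon\in\BMAD\); this holds since translation preserves the box condition, as shown next.

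The key step is the uniform estimate
\[
[g-g_\varepsilon]_{\LMAD}\lesssim \ell(g)+o(1)\qquad(\varepsilon\to0).
\]
Fix \(\eta>0\) and choose \(\delta\) so that \([g]_{\LMAD,\delta}\le\ell(g)+\eta\). We split into cases according to the box size \(h\).

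\emph{Case \(h\le\delta\).} Bound \([g-g_\varepsilon]\le[g]+[g_\varepsilon]\) on such boxes. For \(g\) the bound \(\ell(g)+\eta\) is immediate. For \(g_\varepsilon\) we compare boxes. If \(\varepsilon\le h\), then \(Q(t,h)+\varepsilon\subset Q(t,2h)\), and \(\sigma\le \sigma+\varepsilon\), so
\[
\mu_{g_\varepsilon}(Q(t,h))\le \mu_g(Q(t,2h))\lesssim ([g]_{\LMAD,2\delta})^2 h/\log^2(e/h).
\]
If \(\varepsilon>h\), the translated box lies in the region \(\sigma\ge\varepsilon\). There we use \(|g'(s)|\lesssim[g]_{\LMAD,2\delta}/(\sigma\log(e/\sigma))\), a pointwise bound obtained by subharmonicity of \(|g'|^2\) on a disc of radius \(\sigma/2\) contained in a box of size \(\asymp\sigma\). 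This gives
\[
\mu_{g_\varepsilon}(Q)\lesssim [g]^2_{\LMAD,2\delta}\, h^3/(\varepsilon^2\log^2(e/\varepsilon)),
\]
which is again \(\lesssim [g]^2_{\LMAD,2\delta}\, h/\log^2(e/h)\). Applying the same argument with \(\delta/2\) in place of \(\delta\) handles the doubling.

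\emph{Case \(\delta<h\le1\).} Here \(\log(e/h)\le\log(e/\delta)\) is bounded. We need
\[
\mu_{g-g_\varepsilon}(Q(t,h))/h\to0 \quad\text{uniformly in }t.
\]
This is the main obstacle, since uniformity in \(t\) over the whole line is not automatic. First split off the strip \(\sigma<\delta'\) for small \(\delta'\); it contributes at most a multiple of \((\ell(g)+\eta)^2\) by the previous case, covering it by boxes of size \(\delta'\). On \(\sigma\ge\delta'\), write
\[
g'(s)-g'(s+\varepsilon)=-\int_0^\varepsilon g''(s+x)\,dx .
\]
The Cauchy estimate gives \(|g''|\le C_{\delta'}\) uniformly on \(\Real s\ge\delta'/2\), because \(g'\) is uniformly bounded on \(\Real s\ge\delta'/4\). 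Hence \(|g'-g'_\varepsilon|\le C_{\delta'}\varepsilon\) there, uniformly in \(t\), and the contribution tends to \(0\) as \(\varepsilon\to0\).

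Combining the cases gives \(\limsup_{\varepsilon\to0}[g-g_\varepsilon]_{\LMAD}\lesssim\ell(g)+\eta\). Since \(\|g-g_\varepsilon\|_{\LMAD}=[g-g_\varepsilon]_{\LMAD}\), the constant terms cancelling, we get \(\operatorname{dist}_{\LMAD}(g,LMOA_{\mathcal D,0})\lesssim\ell(g)\).
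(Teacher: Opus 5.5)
Your proof is correct. The lower bound, the fact that $g_\varepsilon\in LMOA_{\mathcal D,0}$, and the small-box estimate are essentially the paper's argument. The small-box estimate has the same two cases, $\varepsilon\le h$ (comparison with $Q(t,2h)$) and $h<\varepsilon$ (the subharmonicity bound $|g'(s)|\lesssim [g]_{\LMAD,C\delta}/(\sigma\log(e/\sigma))$).

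The real difference is in the boxes with $\delta<h\le1$. The paper first shows $g\in VMOA(\Czero)$, using that $\mu_g(Q)/h$ vanishes uniformly as $h\to0$. It then invokes Garnett's characterization of $VMO$ by approximation with translates to get $\|g-g_\varepsilon\|_{\operatorname{BMO}(\Czero)}\to0$. By Lemma~\ref{lem:box-growth}, the large-box contribution is then $\lesssim\log^2(e/r)\,\|g-g_\varepsilon\|^2_{\operatorname{BMO}(\Czero)}\to0$.

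You instead split each large box into two pieces. On the strip $\{\sigma<\delta'\}$ you cover by boxes of side $\delta'$ and reuse the small-box bound. This costs $\lesssim\ell(g)+\eta$ rather than $o(1)$, which is all you need. On the interior $\{\sigma\ge\delta'\}$, Cauchy estimates give $|g'-g'_\varepsilon|\le C_{\delta'}\varepsilon$ uniformly in $\tau$. That resolves the uniformity-in-$t$ obstacle you flagged.

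Your route is fully elementary and avoids $VMO$ theory altogether. The paper's route is shorter given the citations and gives the stronger conclusion that $g_\varepsilon\to g$ in $\operatorname{BMO}(\Czero)$.

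Two small points to tidy:
\begin{itemize}
\item In the case $h<\varepsilon$, the disc used for subharmonicity sits in a box of side up to $3\varepsilon$. You therefore need $\varepsilon\lesssim\delta$ for $[g]_{\LMAD,C\delta}$ to apply; this is harmless since $\varepsilon\to0$ with $\delta$ fixed.
\item Membership $g_\varepsilon\in\BMAD$ follows most directly from two facts: $g_\varepsilon$ is bounded on $\Czero$ by \eqref{eq:growth}, and your bound $\mu_{g_\varepsilon}(Q(t,h))\le C_\varepsilon^2h^3$ then verifies the hypotheses of Lemma~\ref{lem:box-growth}(i).
\end{itemize}
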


\begin{proof}
If \(k\in LMOA_{\mathcal D,0}\), then Minkowski's inequality gives
\[
 [g]_{\LMAD,\delta}
 \leq [g-k]_{\LMAD}+[k]_{\LMAD,\delta}.
\]
Letting \(\delta\rightarrow0\) and then taking the infimum over \(k\) proves
the lower estimate in \eqref{eq:distance}.

Conversely, we first note that
\(g_\varepsilon(s)=g(s+\varepsilon), \varepsilon>0\) is an ordinary Dirichlet series.  By the
half-plane Poisson formula and semigroup property
\cite[Chapter~I, Section~3]{Garnett}, horizontal translation is contractive
on the \(BMOA\) seminorm.  Consequently,
$
 g_\varepsilon\in\BMAD,
$ and 
\[
 \|g_\varepsilon\|_{\operatorname{BMO}(\Czero)}
 \leq\|g\|_{\operatorname{BMO}(\Czero)}.
\]
The growth estimate
\eqref{eq:growth} and Cauchy's estimate show that \(g'\) is bounded on
\(\Real s\geq\varepsilon\).  Hence
\[
 \mu_{g_\varepsilon}(Q(t,h))
 \leq C_\varepsilon h^3,
\]
 and \(h^2\log^2(e/h)\rightarrow0\) as \(h\rightarrow0\).  Thus
\(g_\varepsilon\in LMOA_{\mathcal D,0}\).

Fix \(0<r<1/12\) and \(0<\varepsilon\leq r\).   If \(\varepsilon\leq h\leq r\), then the change of
variable \(u=\sigma+\varepsilon\) gives
\[
 \int_{Q(t,h)}|g'(s+\varepsilon)|^2\sigma
 \dd\sigma\,\dd\tau
 \leq \mu_g(Q(t,2h)).
\]
Since \(\log(e/h)\asymp\log(e/(2h))\),
$$
 \int_{Q(t,h)}|g'(s+\varepsilon)|^2\sigma
\dd\sigma\,\dd\tau
\leq \frac{C[g]_{\LMAD,2r}^2h}{\log^2(e/h)}.
$$

Suppose instead that \(0<h<\varepsilon\).  For
\(z=u+iv\), where \(\varepsilon\leq u<2\varepsilon\), write
\(w=\xi+i\eta\).  Subharmonicity on the disk \(D(z,u/4)\) yields
\begin{align*}
 |g'(z)|^2
 &\leq \frac{C}{u^2}\int_{D(z,u/4)}|g'(w)|^2
 \dd\xi\,\dd\eta\\
 &\leq \frac{C}{u^3}\mu_g(Q(v-u,3u))
 \leq \frac{C[g]_{\LMAD,6r}^2}
              {\varepsilon^2\log^2(e/\varepsilon)}.
\end{align*}
Indeed, \(D(z,u/4)\subset Q(v-u,3u)\) and
\(\Real w\geq3u/4\) on \(D(z,u/4)\).  Therefore
\[
 \int_{D(z,u/4)}|g'(w)|^2\dd\xi\,\dd\eta
 \leq\frac{C}{u}\int_{D(z,u/4)}|g'(w)|^2\Real w
 \dd\xi\,\dd\eta,
\]
which explains the second inequality above.    It follows that
\[
 \int_{Q(t,h)}|g'(s+\varepsilon)|^2\sigma
 \dd\sigma\,\dd\tau
 \leq \frac{C[g]_{\LMAD,6r}^2h^3}
              {\varepsilon^2\log^2(e/\varepsilon)}.
\]
Since \(0<h<\varepsilon\), we know that
\[
 \left(\frac{h}{\varepsilon}\right)^2
 \frac{\log^2(e/h)}{\log^2(e/\varepsilon)}\leq C.
\]
Combining these estimates, for \(0<\varepsilon\leq r<1/12\), we get
\begin{equation}\label{eq:translated-small-boxes}
 [g_\varepsilon]_{\LMAD,r}
 \leq C[g]_{\LMAD,6r}.
\end{equation}

By our assumption  \(g\in\LMAD\), we know that
\[
 \sup_{t\in\mathbb R,\,0<h\leq\delta}
 \frac{\mu_g(Q(t,h))}{h}
 \leq \frac{[g]_{\LMAD}^2}{\log^2(e/\delta)}
 \rightarrow0, \quad \delta \rightarrow 0.
\]
Together with
\cite[Chapter~VI, Exercise~14(b)]{Garnett}, this proves
\(g\in VMOA(\Czero)\cap\cD\).  The approximation characterization
\cite[Chapter~VI, Theorem~5.1(c)]{Garnett} then gives
\(\|g-g_\varepsilon\|_{\operatorname{BMO}(\Czero)}\to0\).  For the boxes with
\(r<h\leq1\), Lemma~\ref{lem:box-growth} consequently gives
\begin{align*}
 &\sup_{t,\,r<h\leq1}\log^2(e/h)\frac1h
 \int_{Q(t,h)}|g'(s)-g'(s+\varepsilon)|^2\sigma
 \dd\sigma\,\dd\tau\\
 \leq&
 C\log^2(e/r)\|g-g_\varepsilon\|_{\operatorname{BMO}(\Czero)}^2
 \rightarrow0.
\end{align*}
Consequently, Minkowski's inequality and
\eqref{eq:translated-small-boxes} give
\[
 [g-g_\varepsilon]_{\LMAD,r}
 \leq [g]_{\LMAD,r}+[g_\varepsilon]_{\LMAD,r}
 \leq C[g]_{\LMAD,6r}.
\]
Since \(g-g_\varepsilon\) has zero constant coefficient, we conclude that
\[
 \limsup_{\varepsilon\rightarrow0}
 \|g-g_\varepsilon\|_{\LMAD}
 \leq C[g]_{\LMAD,6r}.
\]
Letting \(r\rightarrow0\), and recalling that
\(g_\varepsilon\in LMOA_{\mathcal D,0}\), we obtain
\[
 \operatorname{dist}_{\LMAD}(g,LMOA_{\mathcal D,0})
 \leq \limsup_{\varepsilon\rightarrow0}
       \|g-g_\varepsilon\|_{\LMAD}
 \leq C\lim_{\delta\rightarrow0}[g]_{\LMAD,\delta}.
\]
This proves the upper estimate in \eqref{eq:distance}.
\end{proof}

The following lemma can be found in
\cite[Proposition~6]{LNST}, see also \cite{Leibov}.

\begin{lemma}\label{lem:c0}
	Let \((v_n)\subset VMOA(\D)\) have \(BMOA\) seminorms bounded above
	and away from zero, and suppose that
$
	\|v_n\|_{H^2(\D)}\longrightarrow0.
$
	Then there exist a subsequence \((v_{n_j})\) and constants
	\(0<c\leq C<\infty\) such that
	\[
	c\sup_j|\lambda_j|
	\leq
	\left\|\sum_j\lambda_jv_{n_j}\right\|_{BMOA(\D)}
	\leq
	C\sup_j|\lambda_j|
	\]
	for every finitely supported scalar sequence \((\lambda_j)\). Consequently,
	\[
	\iota:c_0\longrightarrow
	\overline{\operatorname{span}}\{v_{n_j}:j\geq1\},
	\qquad
	\iota((\lambda_j))
	=\sum_{j=1}^{\infty}\lambda_jv_{n_j},
	\]
	is an isomorphism onto its range.  In particular,
	\((v_{n_j})\) is equivalent to the canonical basis of \(c_0\).
\end{lemma}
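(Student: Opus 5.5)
The plan is a gliding-hump argument built on the Carleson-measure description of the $BMOA(\D)$ seminorm, followed by the standard extension from finitely supported sequences to $c_0$. For an arc $I\subset\partial\D$ let $S(I)$ be its Carleson box, and write
\[
\beta_I(f)^2=\frac1{|I|}\int_{S(I)}|f'(z)|^2(1-|z|^2)\,dA(z),
\]
so that $\|f\|_{BMOA(\D)}^2\asymp \sup_I\beta_I(f)^2$, modulo the harmless constant term, which tends to $0$ here because $|v_n(0)|\le\|v_n\|_{H^2(\D)}$. Assume $a\le \|v_n\|_{BMOA(\D)}\le A$. The hypotheses give two separate kinds of smallness.
\begin{itemize}
\item[(a)] Since $\|v_n\|_{H^2(\D)}\to0$, Cauchy estimates show that $v_n'\to0$ uniformly on every disc $\{|z|\le r\}$, $r<1$. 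Hence the part of $\beta_I(v_n)^2$ coming from $\{|z|\le r\}$ tends to $0$ uniformly in $I$.
\item[(b)] Since each $v_n\in VMOA(\D)$, $\sup_{|I|\le\eta}\beta_I(v_n)\to0$ as $\eta\to0$ for each fixed $n$.
\end{itemize}

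Using (a) and (b) alternately, I choose inductively $n_1<n_2<\cdots$ and radii $r_0=0<r_1<r_2<\cdots\to1$ such that, with $\varepsilon_j=\varepsilon 2^{-j}$ and $\varepsilon$ small compared with $a$, the following three conditions hold.
\begin{itemize}
\item[(1)] The restriction of $|v_{n_j}'|^2(1-|z|^2)\,dA$ to $\{|z|<r_{j-1}\}$ has Carleson norm $\le\varepsilon_j^2$, and in fact $|v_{n_j}'|\le\varepsilon_j$ there. This uses (a), choosing $n_j$ large after $r_{j-1}$ is fixed.
\item[(2)] $\beta_I(v_{n_j})\le\varepsilon_j$ whenever $|I|\le 1-r_j$. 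This uses (b), choosing $r_j$ close to $1$ after $n_j$ is fixed.
\item[(3)] Together with $\beta_I(v_{n_j})\le CA$ for all $I$, condition (2) implies that $|v_{n_j}'(z)|\lesssim A/(1-|z|)$ is essentially negligible near $|z|\ge r_j$ in the Carleson sense. Write $A_j=\{r_{j-1}\le|z|<r_j\}$ and $w_j=v_{n_j}$; then $w_j'$ is concentrated on $A_j$ up to errors $\varepsilon_j$.
\end{itemize}

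\textbf{Upper estimate.} Fix an arc $I$ and let $k$ be the index with $1-r_k<|I|\le 1-r_{k-1}$. Split $\sum_j\lambda_jw_j$ into three groups.
\begin{itemize}
\item For $j<k$ we have $|I|\le 1-r_{j}$, so by (2) each such term contributes at most $|\lambda_j|\varepsilon_j$ to $\beta_I$.
\item For $j>k+1$ the box $S(I)$ is contained in the region $|z|\ge 1-|I|\ge r_{k}$. There I use the pointwise bound from (1) on the part inside $|z|<r_{j-1}$, and (2) applied to the Whitney sub-boxes of $S(I)$ for the part outside. Each term then contributes $\lesssim|\lambda_j|\varepsilon_j$.
\item The at most two remaining indices $j\in\{k,k+1\}$ contribute at most $2CA\sup|\lambda_j|$.
\end{itemize}
Minkowski's inequality in $L^2$ of the box measure then gives
\[
\beta_I\Bigl(\sum_j\lambda_jw_j\Bigr)\le (2CA+\varepsilon)\sup_j|\lambda_j|.
\]

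\textbf{Lower estimate.} Fix $k$ and pick an arc $I_k$ with $\beta_{I_k}(w_k)\ge a/2$. By (2) we must have $|I_k|>1-r_k$. Subtracting the contribution of $\{|z|<r_{k-1}\}$, which is small by (1), I may also assume the mass of $w_k$ on $I_k$ sits in $A_k$. The same splitting as above shows $\beta_{I_k}(w_j)\lesssim\varepsilon_j$ for all $j\ne k$, except possibly $j=k\pm1$. To handle these neighbours I pass to every other index, or equivalently build the requirement $\beta_{I}(w_{j})\le\varepsilon_j$ on the relevant range into the induction. This gives
\[
\Bigl\|\sum_j\lambda_jw_j\Bigr\|\gtrsim\beta_{I_k}\Bigl(\sum_j\lambda_jw_j\Bigr)\ge|\lambda_k|\frac a2-\varepsilon\sup_j|\lambda_j|.
\]
Taking the $k$ that nearly attains $\sup|\lambda_j|$ and choosing $\varepsilon<a/4$ yields the lower bound.

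\textbf{Extension to $c_0$.} The two-sided estimate shows that partial sums $\sum_{j\le N}\lambda_jw_j$ form a Cauchy sequence for every $(\lambda_j)\in c_0$. Hence $\iota$ extends to an isomorphism onto the closed span, and $(w_j)$ is equivalent to the unit vector basis of $c_0$.

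The main obstacle is the bookkeeping in the upper estimate for boxes that straddle several annuli. Specifically, one must make sure that the interaction of $w_j$ for $j\gg k$ with a large box $S(I)$ is controlled by $\varepsilon_j$ uniformly in $I$. I would handle this by strengthening (1) to the uniform pointwise bound $|w_j'|\le\varepsilon_j$ on $|z|<r_{j-1}$, so that the part of the box inside that disc contributes at most $\varepsilon_j^2\,|I|^{-1}\int_{S(I)}(1-|z|^2)\,dA\lesssim\varepsilon_j^2$. The remaining thin part near the boundary is then covered by the small-box estimate (2) after decomposing it into boxes of side at most $1-r_j$.
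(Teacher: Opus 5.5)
The paper does not prove this lemma; it quotes it from \cite[Proposition~6]{LNST}. So your argument has to stand on its own, and as written the upper estimate has a genuine gap in the group $j>k+1$.

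\textbf{The gap.} Condition (1) controls $w_j'$ only on $\{|z|<r_{j-1}\}$. Condition (2) controls only boxes of side at most $1-r_j$. Since a Whitney box at height $y$ sits in a Carleson box of side $\asymp y$, (2) controls only the layer $1-|z|\lesssim 1-r_j$. The part of $S(I)$ lying in the annulus $A_j=\{r_{j-1}\le|z|<r_j\}$ is covered by neither, and there you only have the trivial bound $\beta_J(w_j)\le CA$.

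Because $|I|>1-r_k\ge 1-r_{j-1}$, the box $S(I)$ contains the full depth of $A_j$ above $I$. That is exactly where you arranged the mass of $w_j$ to sit. (Incidentally, $|I|>1-r_k$ gives $1-|I|<r_k$, not $\ge r_k$.)

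This is not a bookkeeping issue: (1) and (2) together do not imply the upper bound. Take
\[
w_j(z)=z^{N_j}\bigl(\ell_j/(1-(1-\ell_j)z)\bigr)^2,
\]
with $\ell_j=1/j$, $N_j$ huge compared with $(1-r_{j-1})^{-1}$, and $r_j\to1$ fast.
\begin{itemize}
\item These functions satisfy the hypotheses: $\|w_j\|_{H^2(\mathbb D)}^2\asymp\ell_j\to0$ and the seminorms are $\asymp1$.
\item They also satisfy (1) and (2).
\item Let $I$ be the arc of length $1/J$ centred at $1$. Every $j\le J$ with $1-r_{j-1}\le 1/J$ has $\beta_I(w_j)\asymp1$.
\item The $w_j'$ live in disjoint annuli, so these contributions add in $L^2$. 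Hence $\sup_I\beta_I\bigl(\sum_{j\le J}w_j\bigr)\to\infty$.
\end{itemize}

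Your lower estimate uses the same splitting. In addition, nothing in (1) forces $|I_k|\le 1-r_{k-1}$, and you need that to apply (2) to $w_j$, $j<k$, on $S(I_k)$. Discarding every other index does not help, because the bad interactions are not confined to neighbours.

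\textbf{The fix.} The missing idea is to use $\|v_n\|_{H^2(\mathbb D)}\to0$ quantitatively on large boxes, rather than pointwise on a disc. By the Littlewood--Paley identity,
\[
\beta_I(f)^2\le\frac1{|I|}\int_{\mathbb D}|f'(z)|^2(1-|z|^2)\,dA(z)\lesssim\frac{\|f\|_{H^2(\mathbb D)}^2}{|I|}.
\]
So choose $n_j$, after $r_{j-1}$ is fixed, with $\|v_{n_j}\|_{H^2(\mathbb D)}^2\le\varepsilon_j^2(1-r_{j-1})$. Then $\beta_I(w_j)\lesssim\varepsilon_j$ for every arc with $|I|\ge 1-r_{j-1}$.

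Now take $1-r_k<|I|\le 1-r_{k-1}$. Every $j<k$ is small by (2), every $j>k$ is small by this new condition, and only $j=k$ survives. Minkowski's inequality then gives
\[
\beta_I\Bigl(\sum_j\lambda_jw_j\Bigr)\le(CA+C\varepsilon)\sup_j|\lambda_j|.
\]
For the lower bound, an arc nearly maximizing $\beta_I(w_k)$ must satisfy $1-r_k<|I_k|<1-r_{k-1}$. On that arc every $w_j$ with $j\ne k$ is small, which gives the lower bound directly.

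With this change, condition (3), the annulus analysis and the passage to every other index all become unnecessary. Your final extension to $c_0$ then goes through as written.
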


\section{Proofs of the main results}
In this section, we prove our two main results,
Theorems~\ref{thm:bounded} and~\ref{thm:essential}.
We begin with a lemma that connects functions in \(BMOA(\D)\) with
Dirichlet series supported on powers of \(2\). We then use it to construct
the test functions needed below.

\begin{lemma}\label{lem:periodic-composition}
Let \(H\in BMOA(\D)\), with
\(H(z)=\sum_{m\geq0}c_mz^m\), and set
\[
 H(2^{-s})=\sum_{m\geq0}c_m(2^m)^{-s}.
\]
Then \(H(2^{-s})\in\BMAD\), and
\[
 \|H(2^{-s})\|_{\BMAD}\asymp\|H\|_{BMOA(\D)}.
\]
\end{lemma}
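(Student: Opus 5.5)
The plan is to write $f(s)=H(2^{-s})$, i.e.\ $f=H\circ\psi$ with $\psi(s)=2^{-s}$, and to compare the two BMO norms through the Carleson box characterizations: Lemma~\ref{lem:box-growth}(i) on the half-plane side and the classical Carleson measure characterization of $BMOA(\D)$ on the disc side. The constant term is harmless, since $f(+\infty)=c_0=H(0)$. We have $f'(s)=-\log 2\,\psi(s)H'(\psi(s))$. The map $\psi$ sends $\Czero$ onto $\D\setminus\{0\}$ and is $2\pi/\log2$-periodic in $\tau$. It is locally conformal with $|\psi'(s)|=\log 2\cdot 2^{-\sigma}$, and $1-|\psi(s)|=1-2^{-\sigma}\asymp\sigma$ for $0<\sigma\le1$.

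First I check the hypothesis $f\in H_{\mathrm i}^2(\Czero)$. Since $H\in BMOA\subset H^2$, $|H|^2$ has a harmonic majorant $u$ on $\D$. Then $u\circ\psi$ is a harmonic majorant of $|f|^2$ on $\Czero$, and $u\circ\psi$ is bounded on $\sigma\ge1$. Alternatively, periodicity gives directly
\[
\int_{\mathbb R}\frac{|f(\sigma+i\tau)|^2}{1+\tau^2}\dd\tau\lesssim\sup_{r<1}\int_0^{2\pi}|H(re^{i\theta})|^2\dd\theta,
\]
uniformly in $\sigma$, by summing over periods. So $f\in H_{\mathrm i}^2\cap\cD$.

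Next I compare boxes. For $t\in\mathbb R$ and $0<h\le1$, change variables $z=\psi(s)$ on $Q(t,h)$. If $h<2\pi/\log 2$, the map is injective on the box, and
\[
|f'(s)|^2\sigma\dd A(s)=|H'(z)|^2\,\frac{|z|^2\log^2 2}{|\psi'(s)|^2}\,\sigma\,|\psi'(s)|^2\dd A(z)\asymp |H'(z)|^2(1-|z|)\dd A(z).
\]
Here I use that $|z|\asymp1$ and $\sigma\asymp 1-|z|\asymp\log(1/|z|)$ on the box. The image of $Q(t,h)$ is a polar rectangle $\{2^{-h}<|z|<1,\ \theta\in(-(t+h)\log2,-t\log2)\}$, which is comparable to a Carleson box $S(I)$ with $|I|\asymp h$. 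Therefore
\[
\frac1h\int_{Q(t,h)}|f'|^2\sigma\dd A\asymp \frac1{|I|}\int_{S(I)}|H'|^2(1-|z|)\dd A\lesssim\|H\|_{BMOA(\D)}^2 .
\]
Every box with $h\le1$ is of this form, so Lemma~\ref{lem:box-growth}(i) gives $\|f\|_{\operatorname{BMO}(\Czero)}\lesssim\|H\|_{BMOA(\D)}$ (modulo the constant term).

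For the reverse inequality, take any arc $I\subset\partial\D$. If $|I|\le \log 2$, then $S(I)$ is covered by the image of a single box $Q(t,h)$ with $h\asymp|I|$, so the same computation bounds the $|I|^{-1}$-normalized Carleson integral by $\|f\|_{\operatorname{BMO}}^2$. If $|I|$ is large, $S(I)$ is covered by boundedly many boxes of side $\le1$ (near the circle) together with the disc $|z|\le 2^{-1}$. The integral over that disc is controlled by $\sup_{|z|\le1/2}|H'(z)|^2$, which in turn is controlled by $\|f\|_{H_{\mathrm i}^2}$, or by the coefficients, hence by the BMO norm of $f$ plus $|c_0|$; one can use \eqref{eq:growth} on $\sigma=1$ together with Cauchy estimates. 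Hence $\|H\|_{BMOA(\D)}\lesssim\|f\|_{\BMAD}$.

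The main obstacle is the bookkeeping in this last step, namely handling large arcs and the region near $z=0$, where $\psi$ degenerates. The norm of $H$ must not pick up pieces uncontrolled by boxes of side at most $1$. I plan to handle it by splitting at $|z|=2^{-1}$, and I will use the Garsia-type norm $|H(0)|+\sup_I(\cdots)^{1/2}$ to absorb the interior piece.
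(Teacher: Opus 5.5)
Your argument is correct, but it follows a different route from the paper.

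\textbf{The paper's route.} The paper works entirely on the boundary. It observes that $f(i\tau)=H(e^{-i\tau\log 2})$ is periodic with period $2\pi/\log 2$. It then invokes the standard identification of circle $BMO$ with periodic $BMO$ on $\mathbb R$ to get comparability of the seminorms. For $f\in H_{\mathrm i}^2(\Czero)$ it uses the same summation over periods that you give as your alternative.

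\textbf{Your route.} You transport the two Carleson-measure characterizations through the covering map $\psi(s)=2^{-s}$: Lemma~\ref{lem:box-growth}(i) on the half-plane side, and the classical Carleson characterization of $BMOA(\D)$ on the disc side.

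\textbf{Comparison.} The paper's version is two lines, but it outsources the real content to a citation: short intervals correspond to arcs, and long intervals are handled by periodicity. Yours is self-contained once Lemma~\ref{lem:box-growth}(i) is granted. It also makes transparent why only boxes with $h\le 1$ matter, i.e.\ only the annulus $1/2<|z|<1$. The degeneration of $\psi$ at $z=0$ is absorbed by \eqref{eq:growth} and Cauchy estimates. This is the interior counterpart of the long-interval case in the periodic picture.

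\textbf{Two small bookkeeping points.}

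First, since $dA(z)=|\psi'(s)|^2\,dA(s)$, the exact identity is $|f'(s)|^2\sigma\,dA(s)=|H'(z)|^2\sigma\,dA(z)$, with $\sigma=\log(1/|z|)/\log 2$. Your displayed formula is off by the factor $|\psi'(s)|^2=|z|^2\log^2 2$. This is harmless, because that factor is $\asymp 1$ where $|z|>1/2$.

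Second, to bound $\sup_{|z|\le 1/2}|H'(z)|$ by Cauchy estimates, you need control of $H-H(0)$ on a strictly larger disc. Bounding it only on $\sigma=1$, i.e.\ on $|z|\le 1/2$ via the maximum principle, gives $H'$ only on $|z|\le 1/4$. So apply \eqref{eq:growth} to $f-c_0$ (whose $\BMAD$ norm is $\|f\|_{\operatorname{BMO}(\Czero)}$) on $\sigma\ge 1/2$, i.e.\ on $|z|\le 2^{-1/2}$. This bounds the interior piece by $\|f\|_{\operatorname{BMO}(\Czero)}^2$ and closes the large-arc case.
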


\begin{proof}
Under the Bohr correspondence, a Dirichlet series supported on \(\{2^m:m\geq0\}\)
has a Bohr lift depending only on the first variable \(z_1\), in particular,
the Bohr lift of \(H(2^{-s})\) is \(H(z_1)\). Its boundary function
\(H(e^{-it\log2})\) is periodic with period \(2\pi/\log2\).  Hence the standard identification of circle \(BMO\) with
periodic \(BMO\) on \(\mathbb R\) (see \cite[p.~138]{KQSS}) shows that the corresponding \(BMO\)
seminorms are comparable. Moreover, since
\(BMOA(\D)\subset H^2(\D)\), splitting \(\mathbb R\) into intervals of length \(\frac{2\pi}{\log2}\)
and using the periodicity, we have
\[
\int_{\mathbb R}
|H(e^{-it\log2})|^2\frac{\dd t}{1+t^2}
\lesssim
\int_0^{2\pi/\log2}|H(e^{-it\log2})|^2\,\dd t
\asymp
\|H\|_{H^2(\D)}^2<\infty.
\]
Thus \(H(2^{-s})\in H_{\mathrm{i}}^2(\Czero)\), and consequently
\(H(2^{-s})\in\BMAD\). Since the constant term of \(H(2^{-s})\) is
\(H(0)\), the norm equivalence follows.
\end{proof}

\begin{lemma}\label{lem:tests}
	For each \(a\in\D\), define \(H_a\in H(\D)\) by
	\[
	H_a(z)=\log\frac{1}{1-\overline a z},
	\qquad z\in\D.
	\]
	Then the following assertions hold.
	\begin{description}
		\item[(i)] For every \(a\in\D\), the function
		\[
		s\longmapsto H_a(2^{-s}),
		\qquad s\in\Czero,
		\]
		belongs to \(\VMAD\). Moreover,
		$
		\sup_{a\in\D}
		\|H_a(2^{-s})\|_{\BMAD}<\infty.
	$
		
		\item[(ii)] There exist constants \(h_0\in(0,1)\) and \(c>0\) such that,
		for every \(t\in\mathbb R\) and \(0<h\leq h_0\), if
	$
		a=(1-h)e^{-i(t+h/2)\log2},
	$
		then \(a\in\D\) and
		\begin{equation}\label{eq:test-lower}
			|H_a(2^{-s})|
			\geq c\log(e/h),
			\qquad s\in Q(t,h).
		\end{equation}
	\end{description}
\end{lemma}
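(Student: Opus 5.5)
The plan is to reduce everything to classical facts about \(H_a\) on the disc and then transfer them via Lemma~\ref{lem:periodic-composition}.

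For (i), the starting point is that \(\sup_{a\in\D}\|H_a\|_{BMOA(\D)}<\infty\). This is the standard fact that \(\log\frac{1}{1-\overline a z}\) has uniformly bounded BMOA norm. One way to see it is that \(H_a(z)=H_{|a|}(e^{-i\arg a}z)\) with rotation invariance. Another is the Carleson-measure criterion applied to \(H_a'(z)=\overline a/(1-\overline a z)\): the measure \(|H_a'|^2(1-|z|^2)\,dA\) has Carleson box constant bounded independently of \(a\). Also \(H_a(0)=0\). Lemma~\ref{lem:periodic-composition} then gives \(\sup_a\|H_a(2^{-s})\|_{\BMAD}<\infty\).

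For membership in \(\VMAD\), I will use that for fixed \(a\in\D\) the function \(H_a\) is analytic on a neighbourhood of \(\overline{\D}\). Hence its Taylor partial sums \(P_N\) converge to \(H_a\) in \(BMOA(\D)\); indeed \(\|H_a-P_N\|_{BMOA}\lesssim\|H_a-P_N\|_{H^\infty}+\|(H_a-P_N)'\|_{H^\infty}\), and both tend to \(0\) geometrically. Applying Lemma~\ref{lem:periodic-composition} to \(H_a-P_N\), which is linear in \(H\), gives \(\|H_a(2^{-s})-P_N(2^{-s})\|_{\BMAD}\to0\). Since each \(P_N(2^{-s})\) is a Dirichlet polynomial, \(H_a(2^{-s})\in\VMAD\).

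For (ii), first note \(|a|=1-h<1\). For \(s=\sigma+i\tau\in Q(t,h)\), put \(z=2^{-s}=2^{-\sigma}e^{-i\tau\log2}\). The goal is to show \(|1-\overline a z|\lesssim h\). Write
\[
1-\overline a z=1-(1-h)2^{-\sigma}e^{i(t+h/2-\tau)\log 2}.
\]
Here \(0<\sigma<h\) and \(|t+h/2-\tau|\le h/2\). Thus the modulus factor satisfies \(1-(1-h)2^{-\sigma}\in[0,h+\sigma\log2]\subset[0,2h]\), and the argument is at most \((h/2)\log 2\). Then
\[
|1-\overline a z|\le \bigl|1-(1-h)2^{-\sigma}\bigr|+\bigl|e^{i\theta}-1\bigr|\le 3h.
\]
Consequently
\[
|H_a(z)|\ge \Real H_a(z)=\log\frac1{|1-\overline a z|}\ge\log\frac1{3h}.
\]
For \(h\le h_0\) with \(h_0\) small (say \(h_0=e^{-4}\)), we have \(\log(1/(3h))\ge c\log(e/h)\), since both sides are comparable to \(\log(1/h)\) for small \(h\). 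This gives \eqref{eq:test-lower}.

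The only step needing care is the uniform BMOA bound for \(H_a\). I would cite it as classical, for instance from Garnett or Girela's survey, rather than reprove it. The elementary estimate in (ii) is routine once the parametrization of \(a\) is aligned with the centre of the top edge of the box.
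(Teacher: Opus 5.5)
Your proof is correct and follows essentially the same route as the paper. For (i) you approximate \(H_a\) by its Taylor partial sums, with the tail controlled by \(\sum_{m>N}|a|^m/m\), and transfer to \(\BMAD\) via Lemma~\ref{lem:periodic-composition}, invoking the classical bound \(\sup_a\|H_a\|_{BMOA(\D)}<\infty\). For (ii) you make the same elementary estimate \(|1-\overline a\,2^{-s}|\lesssim h\) on \(Q(t,h)\) and then use \(|H_a|\geq\Real H_a=\log(1/|1-\overline a\,2^{-s}|)\); the only differences are cosmetic (your constant \(3h\) versus the paper's \((1+\tfrac32\log2)h\), and your explicit choice \(h_0=e^{-4}\)).
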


\begin{proof}
We first prove (i).
For each fixed \(a\in\D\), we have
\[
\left\|
H_a-\sum_{m=1}^N\frac{\overline a^{\,m}z^m}{m}
\right\|_{BMOA(\D)}
\lesssim
\sum_{m=N+1}^{\infty}\frac{|a|^m}{m}
\rightarrow0.
\] Thus, by Lemma~\ref{lem:periodic-composition},
\[
\left\|
\sum_{m=1}^N\frac{\overline a^{\,m}}{m}(2^m)^{-s}
-H_a(2^{-s})
\right\|_{\BMAD}
\rightarrow0.
\]
Each partial sum on the left is a Dirichlet polynomial. Therefore,
\(H_a(2^{-s})\) belongs to the closure of the Dirichlet polynomials in
\(\BMAD\), and hence
$
H_a(2^{-s})\in\VMAD.
$ Moreover,
applying Lemma~\ref{lem:periodic-composition} once more, we obtain
\[
\sup_{a\in\D}
\|H_a(2^{-s})\|_{\BMAD}\asymp\sup_{a\in\D}\|H_a\|_{BMOA(\D)}<\infty.
\]

Next, we prove (ii).
Let \(t\in\mathbb R\), \(0<h\leq h_0\), and
$
a=(1-h)e^{-i(t+h/2)\log2}.
$
For \(s=\sigma+i\tau\in Q(t,h)\), we have
\[
\overline a\,2^{-s}
=(1-h)2^{-\sigma}
e^{-i(\tau-t-h/2)\log2}.
\]
Using \(1-2^{-\sigma}\leq\sigma\log2\) and
\(|1-e^{-ix}|\leq|x|\), we obtain
\[
\begin{aligned}
	|1-\overline a\,2^{-s}|
	&\leq
	1-(1-h)2^{-\sigma}
	+(1-h)2^{-\sigma}
	\big|1-e^{-i(\tau-t-h/2)\log2}\big|\\
	&\leq
	h+\sigma\log2
	+|\tau-t-h/2|\log2\\
	&\leq
	\left(1+\frac32\log2\right)h.
\end{aligned}
\]
Since $ \Real(1-\overline a\,2^{-s}) \geq1-|a|2^{-\sigma}>0$, we may
choose \(h_0>0\) sufficiently small such that, for \(0<h\leq h_0\) and
\(s\in Q(t,h)\),
\[
|H_a(2^{-s})|
\geq\Real H_a(2^{-s})
=\log\frac{1}{|1-\overline a\,2^{-s}|}
\geq
\log\frac{1}
{\left(1+\frac32\log2\right)h}\geq\frac12\log(e/h).
\]
Thus \eqref{eq:test-lower} holds with \(c=1/2\).
\end{proof}
 
We now prove Theorem \ref{thm:bounded}.
\begin{proof}[Proof of Theorem~\ref{thm:bounded}]
We first prove (iii)\(\Rightarrow\)(i).
Suppose that \(g\in\LMAD\) and \(f\in\BMAD\).
Fix \(0<\delta\leq1\).  If \(0<h\leq\delta\),
Lemma~\ref{lem:local-embedding} gives
\[
 \int_{Q(t,h)}|f(s)|^2\dd\mu_g(s)
 \leq C[g]_{\LMAD,\delta}^2h\|f\|_{\BMAD}^2.
\]
If \(h>\delta\), choose $N\in\mathbb N$ such that $(N-1)\delta<h\leq N\delta$.  Then
\[
 Q(t,h)\cap\{0<\sigma<\delta\}
 \subset\bigcup_{k=0}^{N-1}Q(t+k\delta,\delta),
\]
where \(N\delta\leq h+\delta\leq2h\).  Applying
Lemma~\ref{lem:local-embedding} yields
\[
\begin{aligned}
 \int_{Q(t,h)\cap\{0<\sigma<\delta\}}
 |f(s)|^2\dd\mu_g(s)
 &\leq C N\delta[g]_{\LMAD,\delta}^2\|f\|_{\BMAD}^2\\
 &\leq C[g]_{\LMAD,\delta}^2h\|f\|_{\BMAD}^2.
\end{aligned}
\]
Consequently,
\begin{equation}\label{eq:strip-embedding}
 \sup_{\substack{t\in\mathbb R\\h>0}}
 \frac1h\int_{Q(t,h)\cap\{0<\sigma<\delta\}}
 |f(s)|^2\dd\mu_g(s)
 \lesssim[g]_{\LMAD,\delta}^2\|f\|_{\BMAD}^2.
\end{equation}
Taking \(\delta=1\) in \eqref{eq:strip-embedding}, it remains to control
\(Q(t,h)\cap\{\sigma\geq1\}\). Lemma~\ref{lem:box-growth} gives directly
\begin{equation}\label{eq:g-bmo-M1}
 \|g\|_{\operatorname{BMO}(\Czero)}^2
 \lesssim
 \sup_{t\in\mathbb R,\,0<h\leq1}\frac{\mu_g(Q(t,h))}{h}
 \leq[g]_{\LMAD}^2.
\end{equation}
If \(\sigma\geq3/2\), apply \eqref{eq:growth} to \(g-g(+\infty)\) on the circle
\(D(\sigma+i\tau,1/2)\) and then use Cauchy's estimate.  For
\(1\leq\sigma\leq3/2\), use the Bloch estimate
\(\sigma|g'(\sigma+i\tau)|\lesssim
\|g\|_{\operatorname{BMO}(\Czero)}\).  Together with
\eqref{eq:g-bmo-M1}, for
\(\sigma\geq1\) and \(\tau\in\mathbb R\),
\begin{equation}
 |g'(\sigma+i\tau)|\lesssim
 2^{-\sigma}[g]_{\LMAD}.
\end{equation}
The growth estimate for \(f\) also gives, for \(\sigma\geq1\),
$
 |f(\sigma+i\tau)|\lesssim\|f\|_{\BMAD}.
$
It follows that
\begin{equation}\label{eq:Tgf-tail}
 \sup_{\tau\in\mathbb R}\int_1^\infty
 |f(\sigma+i\tau)g'(\sigma+i\tau)|^2\sigma\dd\sigma
 \lesssim[g]_{\LMAD}^2\|f\|_{\BMAD}^2.
\end{equation}
In addition, integrating along the positive real axis gives
\begin{equation}\label{eq:Tgf-basepoint}
 |(T_gf)(1)|
 \leq\int_1^\infty|f(\sigma)g'(\sigma)|\dd\sigma
 \lesssim[g]_{\LMAD}\|f\|_{\BMAD}.
\end{equation}
Thus, \eqref{eq:Tgf-tail} and Fubini's theorem yield
\[
 \begin{aligned}
 \int_{Q(t,h)\cap\{\sigma\geq1\}}
   |f(s)g'(s)|^2\sigma\dd\sigma\,\dd\tau
 &=\int_t^{t+h}\int_1^h
   |f(\sigma+i\tau)g'(\sigma+i\tau)|^2
   \sigma\dd\sigma\dd\tau\\
 &\leq C[g]_{\LMAD}^2h\|f\|_{\BMAD}^2.
 \end{aligned}
\]
Combining these estimates, we get
\[
 \sup_{\substack{t\in\mathbb R\\h>0}}
 \frac1h\int_{Q(t,h)}|f(s)g'(s)|^2
 \sigma\dd\sigma\,\dd\tau
 \leq C[g]_{\LMAD}^2\|f\|_{\BMAD}^2.
\]
By \cite[Chapter~VI, Exercise~6(b)]{Garnett} and
\eqref{eq:Tgf-basepoint}, \(T_gf\in BMOA(\Czero)\).  Since \(T_gf\in\cD\)
and \((T_gf)(+\infty)=0\), we conclude that
\[
 \|T_gf\|_{\BMAD}
 \leq C[g]_{\LMAD}\|f\|_{\BMAD}.
\]

Now, we prove (iii)\(\Rightarrow\)(ii).
Let \(g\in\LMAD\) and let \(P\) be a Dirichlet polynomial.
By the implication (iii)\(\Rightarrow\)(i) established above,
\(T_gP\in\BMAD\), and
\[
 \sup_{\substack{t\in\mathbb R\\0<h\leq\delta}}
 \frac1h\int_{Q(t,h)}|(T_gP)'(s)|^2\sigma
 \dd\sigma\,\dd\tau
 \leq
 \frac{\|P\|_{\infty,\Czero}^2[g]_{\LMAD}^2}
 {\log^2(e/\delta)}
 \rightarrow0
\]
as \(\delta\rightarrow0\).
By 
\cite[Chapter~VI, Exercise~14(b)]{Garnett},
\(T_gP\in VMOA(\Czero)\) and it is an
ordinary Dirichlet series.  Hence \(T_gP\in VMOA(\Czero)\cap\cD=\VMAD\).

For \(f\in\VMAD\), Theorem~7.1 of \cite{BPS} gives Dirichlet polynomials
\((P_j)\) such that
$
 \|P_j-f\|_{\BMAD}\rightarrow0.
$
The case (iii)\(\Rightarrow\)(i) gives
\[
 \|T_gP_j-T_gf\|_{\BMAD}
 \leq C[g]_{\LMAD}\|P_j-f\|_{\BMAD}
 \rightarrow0.
\]
For every \(j\), 
$
 T_gP_j\in\VMAD.
$
Since \(\VMAD\) is closed in \(\BMAD\), we know that
$
 T_gf\in\VMAD.
$
Moreover,
\[
\begin{aligned}
 \|T_gf\|_{\VMAD}
 =\|T_gf\|_{\BMAD}
 \leq C[g]_{\LMAD}\|f\|_{\VMAD}.
\end{aligned}
\]
Therefore, we have
$
 \|T_g\|_{\VMAD\to\VMAD}\lesssim[g]_{\LMAD}.
$

Finally, we prove (i)\(\Rightarrow\)(iii) and (ii)\(\Rightarrow\)(iii). Let
\(X\) be either \(\BMAD\) or \(\VMAD\), and suppose that
\(T_g:X\to X\) is bounded.
Since \(1\in\VMAD\subset X\), then
$
 g-g(+\infty)=T_g1\in X\subset\BMAD
$
and
\[
 \|g\|_{\operatorname{BMO}(\Czero)}
 =\|T_g1\|_X
 \leq\|T_g\|_{X\to X}.
\]
In particular, \(g\in\BMAD\).

Fix \(t\in\mathbb R\) and \(0<h\leq h_0\), and choose \(a\) as in
Lemma~\ref{lem:tests}(ii). By Lemma~\ref{lem:tests}(i),
\(H_a(2^{-s})\in\VMAD\subset X\), and these test functions have uniformly
bounded \(X\)-norm. Since
\[
 \big(T_g(H_a(2^{-s}))\big)'=H_a(2^{-s})g',
\]
\eqref{eq:bmo-carleson} and \eqref{eq:test-lower} give
\[
\begin{aligned}
 \log^2(e/h)\frac{\mu_g(Q(t,h))}{h}
 \lesssim \|T_g(H_a(2^{-s}))\|_X^2\lesssim \|T_g\|_{X\to X}^2.
\end{aligned}
\]
For \(h_0<h\leq1\), applying \eqref{eq:bmo-carleson} to
\(T_g1=g-g(+\infty)\), and using
\(\log(e/h)\leq\log(e/h_0)\), gives the same estimate. Consequently,
$
 [g]_{\LMAD}\lesssim\|T_g\|_{X\to X}<\infty.
$
Thus \(g\in\LMAD\). The proof is complete.
\end{proof}

Next, we  prove Theorem \ref{thm:essential}.
\begin{proof}[Proof of Theorem~\ref{thm:essential}]
We first prove (iv)\(\Rightarrow\)(i).
Suppose that \(g\in LMOA_{\mathcal D,0}\).  By
Theorem~\ref{thm:bounded}, \(T_g\) is bounded.  Let \((f_j)\) be a bounded sequence in \(\BMAD\).  By
\eqref{eq:growth}, \((f_j)\) is locally uniformly bounded on
\(\Czero\).  Hence Montel's theorem yields a subsequence, still denoted by
\((f_j)\), that converges uniformly on compact subsets of \(\Czero\) to
some \(f\in H(\Czero)\).  In particular,
$
f_j(1)\rightarrow f(1).
$
Moreover, Cauchy's integral formula gives
$
f_j'\rightarrow f'
$
uniformly on compact subsets of \(\Czero\). By \cite[Chapter~VI, Exercise~6(b)]{Garnett}, we have
\[
\sup_j\sup_{\substack{t\in\mathbb R\\0<h\leq1}}
\frac1h\int_{Q(t,h)}
|f_j'(s)|^2\sigma\,\dd\sigma\,\dd\tau
\lesssim
\sup_j\|f_j\|_{\BMAD}^2
<\infty.
\]
Hence, Fatou's lemma gives
$
f\in BMOA(\Czero)
$
and
\[
\|f\|_{BMOA(\Czero)}
\leq C\sup_j\|f_j\|_{\BMAD}.
\]
It remains to show that \(f\in\cD\).  Since \(f_j\in\BMAD\), we have
\(\sigma_u(f_j)\leq0\), and \eqref{eq:growth} gives
\[
\sup_j
\|f_j(\,\cdot+1/2)\|_{\mathcal H^\infty}
\lesssim
\sup_j\|f_j\|_{\BMAD}
<\infty.
\]
Bayart's Montel theorem \cite[Lemma~18]{Bayart2002} therefore yields a
further subsequence converging uniformly on every half-plane
\(\Real s\geq\varepsilon>0\) to a function represented by an ordinary
Dirichlet series.  On the other hand, the same subsequence converges
locally uniformly to \(f(\,\cdot+1/2)\).  Hence, by uniqueness of the
limit, \(f(\,\cdot+1/2)\) is represented by an ordinary Dirichlet series.
 Thus \(f\in\cD\), and consequently \(f\in\BMAD\).

Put \(u_j=f_j-f\).  Then \((u_j)\) is bounded in \(\BMAD\) and
converges to zero locally uniformly on \(\Czero\).  For every \(\delta>0\),
\eqref{eq:growth} shows that
\((u_j(\,\cdot+\delta/2))\) is bounded in \(\mathcal H^\infty\).
Hence, by Lemma~18 in \cite{Bayart2002},
 every
subsequence of \((u_j)\) has a further subsequence converging uniformly
on \(\Real s\geq\delta\). Since \(u_j\to0\) locally uniformly on
\(\Czero\), the limit of every such subsequence is zero.  Consequently,
\[
 \sup_{\Real s\geq\delta}|u_j(s)|\rightarrow0.
\]

Given \(\varepsilon>0\), choose \(0<\delta<1\) such that
\(
 [g]_{\LMAD,\delta}^2<\varepsilon.
\)
Applying \eqref{eq:strip-embedding} with \(f=u_j\), we obtain
\[
 \sup_{\substack{t\in\mathbb R\\0<h\leq1}}
 \frac1h\int_{Q(t,h)\cap\{0<\sigma<\delta\}}
 |u_j(s)|^2\dd\mu_g(s)
 \lesssim[g]_{\LMAD,\delta}^2\|u_j\|_{\BMAD}^2.
\]
Since \(g\in\BMAD\), Lemma~\ref{lem:box-growth} also gives
\[
\begin{aligned}
 \sup_{\substack{t\in\mathbb R\\0<h\leq1}}
 \frac1h\int_{Q(t,h)\cap\{\sigma\geq\delta\}}
 |u_j(s)|^2\dd\mu_g(s)
 &\leq
 \sup_{\Real s\geq\delta}|u_j(s)|^2
 \sup_{\substack{t\in\mathbb R\\0<h\leq1}}
 \frac{\mu_g(Q(t,h))}{h}\\
 &\lesssim
 \|g\|_{\operatorname{BMO}(\Czero)}^2
 \sup_{\Real s\geq\delta}|u_j(s)|^2.
\end{aligned}
\]
Combining the last two estimates, we obtain
\[
\begin{aligned}
 &\sup_{\substack{t\in\mathbb R\\0<h\leq1}}
 \frac1h\int_{Q(t,h)}|u_j(s)|^2\dd\mu_g(s)\lesssim
 \varepsilon\|u_j\|_{\BMAD}^2
 +\|g\|_{\operatorname{BMO}(\Czero)}^2
 \sup_{\Real s\geq\delta}|u_j(s)|^2.
\end{aligned}
\]
Since \(\varepsilon>0\) is arbitrary, Lemma~\ref{lem:box-growth} yields
\[
\|T_gu_j\|_{\BMAD}^2\lesssim\sup_{\substack{t\in\mathbb R\\0<h\leq1}}
 \frac1h\int_{Q(t,h)}|u_j(s)|^2\dd\mu_g(s)
 \rightarrow0.
\]
Hence \(T_g\) is compact on \(\BMAD\).
By Theorem~\ref{thm:bounded},
\(T_g(\VMAD)\subset\VMAD\).  Since \(\VMAD\) is a closed subspace of
\(\BMAD\) and \(T_g\) is compact on \(\BMAD\), its restriction to
\(\VMAD\) is also compact.

The preceding argument also yields the upper estimate in
\eqref{eq:essential-norm}. In fact, if
\(k\in LMOA_{\mathcal D,0}\), the preceding argument
shows that \(T_k\) is compact on \(\BMAD\), and hence also on \(\VMAD\) after
restriction.  Since the constant functions belong to
\(LMOA_{\mathcal D,0}\), the distance can be computed using the seminorm:
\[
 \operatorname{dist}_{\LMAD}(g,LMOA_{\mathcal D,0})
 =\inf_{k\in LMOA_{\mathcal D,0}}[g-k]_{\LMAD}.
\]
Theorem~\ref{thm:bounded} and Lemma~\ref{lem:distance} therefore give, for
\(X\in\{\BMAD,\VMAD\}\),
\begin{equation}\label{eq:essential-upper}
\begin{aligned}
 \|T_g\|_{w,X}
 &\leq \|T_g\|_{e,X}\leq \inf_{k\in LMOA_{\mathcal D,0}}
       \|T_{g-k}\|_{X\to X}\\
 &\leq C\inf_{k\in LMOA_{\mathcal D,0}}[g-k]_{\LMAD}=C\operatorname{dist}_{\LMAD}(g,LMOA_{\mathcal D,0})\\
 &\leq C\lim_{\delta\rightarrow0}[g]_{\LMAD,\delta}.
\end{aligned}
\end{equation}

We next prove
(i)\(\Rightarrow\)(ii)\(\Rightarrow\)(iv).
Since every compact operator is weakly compact,
(i)\(\Rightarrow\)(ii) is immediate.
To prove (ii)\(\Rightarrow\)(iv) and the reverse estimate in
\eqref{eq:essential-norm}, put
\[
 \alpha=\lim_{\delta\rightarrow0}[g]_{\LMAD,\delta}.
\]
If \(\alpha=0\), Lemma~\ref{lem:distance} gives
\(g\in LMOA_{\mathcal D,0}\), and the reverse estimate is immediate.
Assume henceforth that \(\alpha>0\).
Choose \(h_n\rightarrow0\) satisfying \(0<h_n\leq h_0\), and choose
\(t_n\in\mathbb R\) so that
\begin{equation}\label{eq:alpha-boxes}
 \log(e/h_n)
 \left(\frac{\mu_g(Q(t_n,h_n))}{h_n}\right)^{1/2}
 \geq\frac{\alpha}{2}.
\end{equation}
Set
$
 a_n=(1-h_n)e^{-i(t_n+h_n/2)\log2}.
$
Then set
\[
 f_n(s)=H_{a_n}(2^{-s}).
\]
By Lemma~\ref{lem:tests} and \eqref{eq:alpha-boxes}, we obtain that \((f_n)\) is bounded in \(\BMAD\)
and
\begin{equation}\label{eq:own-box}
 \frac1{h_n}\int_{Q(t_n,h_n)}|f_n|^2\dd\mu_g
 \geq c\alpha^2.
\end{equation}

After passing to a subsequence, we may assume that \((a_n)\) is Cauchy.
For each fixed \(n\), the function \(f_n\) is bounded on \(\Czero\). Indeed, using
\[
H_{a_n}(z)
=\sum_{m=1}^{\infty}\frac{\overline{a_n}^{\,m}z^m}{m},
\]
we obtain, for \(s\in\Czero\),
\[
\begin{aligned}
|f_n(s)|
=|H_{a_n}(2^{-s})|\leq
\sum_{m=1}^{\infty}\frac{|a_n|^m}{m}
=\log\frac{1}{1-|a_n|}
=\log\frac1{h_n}<\infty.
\end{aligned}
\]
Hence, for every fixed \(n\),
\[
\begin{aligned}
 \sup_{\substack{t\in\mathbb R\\0<h\leq\delta}}
 \frac1h\int_{Q(t,h)}|f_n|^2\dd\mu_g
 &\leq
 \|f_n\|_\infty^2
 \sup_{\substack{t\in\mathbb R\\0<h\leq\delta}}
 \frac{\mu_g(Q(t,h))}{h}\\
 &\leq
 \frac{\|f_n\|_\infty^2[g]_{\LMAD}^2}
 {\log^2(e/\delta)}
 \rightarrow0,
 \qquad \delta\to0.
\end{aligned}
\]
Fix \(0<\theta<1\). We now choose a further subsequence as follows.
Suppose that \(n_j\) has
been chosen. Since the above limit holds for the fixed function \(f_{n_j}\),
there exists \(\delta_j>0\) such that
\[
 \sup_{\substack{t\in\mathbb R\\0<h\leq\delta_j}}
 \frac1h\int_{Q(t,h)}|f_{n_j}|^2\dd\mu_g
 \leq c\theta^2\alpha^2.
\]
Since \(h_n\to0\), we may choose \(n_{j+1}>n_j\) so that
\(h_{n_{j+1}}\leq\delta_j\). It follows that
\[
 \frac1{h_{n_{j+1}}}
 \int_{Q(t_{n_{j+1}},h_{n_{j+1}})}
 |f_{n_j}|^2\dd\mu_g
 \leq c\theta^2\alpha^2.
\]
Reindexing the subsequence, we may assume that
\begin{equation}\label{eq:previous-box}
 \frac1{h_{n+1}}
 \int_{Q(t_{n+1},h_{n+1})}
 |f_n|^2\dd\mu_g
 \leq c\theta^2\alpha^2.
\end{equation}

Let \(u_n=f_{n+1}-f_n\).  We apply the reverse triangle inequality in
\[
 L^2\left(Q(t_{n+1},h_{n+1}),\frac{\mu_g}{h_{n+1}}\right).
\]
Together with \eqref{eq:own-box} and \eqref{eq:previous-box}, this gives
\[
 \left(\frac1{h_{n+1}}\int_{Q(t_{n+1},h_{n+1})}
 |u_n|^2\dd\mu_g\right)^{1/2}
 \geq \sqrt{c}\,\alpha-\theta\sqrt{c}\,\alpha
 =(1-\theta)\sqrt{c}\,\alpha.
\]
Hence, Lemma~\ref{lem:box-growth} gives
\begin{equation}\label{eq:image-lower}
 \|T_gu_n\|_{\BMAD}\gtrsim\alpha.
\end{equation}
Moreover, \(u_n(+\infty)=0\), and the boundedness of \(T_g\), together
with \eqref{eq:image-lower}, gives
\[
 \inf_n\|u_n\|_{\operatorname{BMO}(\Czero)}
 =\inf_n\|u_n\|_{\BMAD}
 >0.
\]
Since \((f_n)\) is uniformly bounded in \(\BMAD\) and
\(u_n=f_{n+1}-f_n\),
\begin{align}\label{eq:un-bounded}
 \sup_n\|u_n\|_{\BMAD}
 \leq
 2\sup_n\|f_n\|_{\BMAD}<\infty.
\end{align}
Thus \((u_n)\) is bounded in \(\BMAD\).

Write
\[
 v_n(z)=H_{a_{n+1}}(z)-H_{a_n}(z).
\]
Then
$
 u_n(s)=v_n(2^{-s}).
$
Each \(v_n\) belongs to \(VMOA(\D)\).  Since \((a_n)\) is Cauchy,
dominated convergence gives
\[
 \|v_n\|_{H^2(\D)}^2
 =\sum_{m\geq1}
   \frac{|\overline{a_{n+1}}^{\,m}-\overline{a_n}^{\,m}|^2}{m^2}
 \rightarrow0.
\]
By Lemma~\ref{lem:c0}, a subsequence of \((v_n)\) is equivalent to the
canonical basis of \(c_0\). By Lemma~\ref{lem:periodic-composition}, the
corresponding subsequence of \((u_n)\) has the same property. Let \((e_n)\)
denote the canonical basis of \(c_0\). Hence, after reindexing, there exists an
isomorphism
\[
\iota:c_0\longrightarrow
\overline{\operatorname{span}}\{u_n:n\ge1\}\subset\BMAD
\]
such that
$
\iota(e_n)=u_n, n\ge1.
$

Lemma~\ref{lem:tests} (i) gives \(u_n\in\VMAD\). Since
\(c_{00}\) is dense in \(c_0\), \(\iota\) is bounded, and \(\VMAD\) is
closed in \(\BMAD\), we know that
\[
 \iota(c_{00})\subset\VMAD
 \quad\Longrightarrow\quad
 \iota(c_0)\subset\VMAD.
\]
Thus, for every \(X\in\{\BMAD,\VMAD\}\), the map
\(\iota:c_0\to X\) is an isomorphism onto its range.
Let \(S:X\to X\) be weakly compact. Then
\(S\circ\iota:c_0\to X\) is weakly compact. The canonical basis
\((e_n)\) is weakly null in \(c_0\), since, for every
\((\lambda_j)\in c_0^*=\ell^1\),
$
 \langle e_n,(\lambda_j)\rangle=\lambda_n\rightarrow0.
$
As in Lemma~7 of \cite{LMN}, the Dunford--Pettis property of \(c_0\)
therefore yields
\[
 \|Su_n\|_X=\|(S\circ\iota)e_n\|_X\rightarrow0.
\]
The norm of \(\VMAD\) is inherited from \(\BMAD\). Hence
\eqref{eq:image-lower}, \eqref{eq:un-bounded}, and the reverse triangle
inequality give
\[
 \liminf_{n\rightarrow\infty}\|(T_g-S)u_n\|_X\gtrsim\alpha
 \quad \text{and} \quad
 \sup_n\|u_n\|_X\lesssim1.
\]
Consequently,
\[
 \|T_g-S\|_{X\to X}
 \geq\limsup_{n\rightarrow\infty}
 \frac{\|(T_g-S)u_n\|_X}{\|u_n\|_X}
 \gtrsim\alpha.
\]
Taking the infimum over all weakly compact operators \(S:X\to X\), we obtain
\[
 \|T_g\|_{w,X}\gtrsim\alpha,
 \quad X\in\{\BMAD,\VMAD\},
\]
which proves the reverse estimate in \eqref{eq:essential-norm} for $X$.
Together with
\eqref{eq:essential-upper} and Lemma~\ref{lem:distance}, this proves
\eqref{eq:essential-norm}.  If (ii) holds, then
\(\|T_g\|_{w,X}=0\), and the reverse estimate implies \(\alpha=0\).
Lemma~\ref{lem:distance} therefore gives
\(g\in LMOA_{\mathcal D,0}\).  Thus (ii)\(\Rightarrow\)(iv).

Finally, we prove
(i)\(\Rightarrow\)(iii)\(\Rightarrow\)(iv).
Compact operators are completely continuous, so
(i)\(\Rightarrow\)(iii).  To prove (iii)\(\Rightarrow\)(iv), suppose that
\(g\notin LMOA_{\mathcal D,0}\).  Then \(\alpha>0\), and the construction above
gives $
e_n\xrightarrow{w}0
$
in $c_0$.
Therefore
$
 u_n=\iota(e_n)\xrightarrow{w}0
$
in $X$. On the other hand,
\[
 \inf_n\|T_gu_n\|_X\gtrsim\alpha>0.
\]
Hence \(T_g\) is not completely continuous, which completes the proof.
\end{proof}

\begin{proof}[Proof of Corollary~\ref{cor:polynomial}]
Let \(g\) be a Dirichlet polynomial. Then
\(g\in\VMAD\subset\BMAD\) and \(g'\in \mathcal H^\infty\). Hence, for
\(t\in\mathbb R\) and \(0<h\leq\delta\leq1\),
\[
\begin{aligned}
 [g]_{\LMAD,\delta}
 &\lesssim\|g'\|_{\mathcal H^\infty}
 \delta\log(e/\delta)\rightarrow0,
\end{aligned}
\]
that is  \(g\in LMOA_{\mathcal D,0}\). By Theorem~\ref{thm:essential}, we 
know that \(T_g\) is compact on both \(\BMAD\) and \(\VMAD\).
\end{proof}

\section{Endpoint compactness on Bergman spaces of Dirichlet series}
In this section, we prove Theorem~\ref{thm:bergman-polynomial}. The argument adapts the method used by Fu, Guo and Yan for the McCarthy-Bergman spaces
\cite[Lemma~5.9]{FuGuoYan}. For a general admissible measure \(\mu\), the key step is to prove that each positive horizontal translation is compact on \(\mathcal A_\mu^1\). The same semigroup argument then shows that the primitive operator is compact. This gives the result for Dirichlet polynomial symbols.
  For \(u\geq0\), define
$
P_uf(s)=f(s+u).
$
If \(f(s)=\sum_{n\geq1}a_n n^{-s}\), write
\[
 S_Nf(s)=\sum_{n\leq N}a_n n^{-s}.
\]

\begin{lemma}\label{lem:translation-compact-Amu}
Let \(\mu\) be an admissible measure. Then, for every \(u>0\),
\(P_u\) is compact on \(\mathcal A_\mu^1\).
\end{lemma}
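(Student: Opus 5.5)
The plan is to show that $P_u$ is a norm limit of the finite-rank operators $S_N P_u$ on $\mathcal A_\mu^1$. Equivalently, we show
\[
\|P_u - S_NP_u\|_{\mathcal A_\mu^1\to\mathcal A_\mu^1}\to0,\qquad N\to\infty .
\]
Since $S_NP_u$ has rank at most $N$, compactness then follows because compact operators form a closed subspace. By density it suffices to test on Dirichlet polynomials $f$. Writing $f_\sigma = P_\sigma f$, we have
\[
\|(I-S_N)P_uf\|_{\mathcal A_\mu^1}=\int_0^\infty \|(I-S_N)f_{\sigma+u}\|_{\mathcal H^1}\,d\mu(\sigma).
\]
So the task reduces to the $\mathcal H^1$ estimate
\[
\|(I-S_N)g_{u}\|_{\mathcal H^1}\le \varepsilon_N(u)\,\|g\|_{\mathcal H^1},\qquad \varepsilon_N(u)\to0,
\]
applied with $g=f_\sigma$ and integrated against $d\mu(\sigma)$, which gives $\|(I-S_N)P_u f\|_{\mathcal A_\mu^1}\le\varepsilon_N(u)\|f\|_{\mathcal A_\mu^1}$.

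The $\mathcal H^1$ step is the main obstacle. The partial sum operators $S_N$ are not uniformly bounded on $\mathcal H^1$, and $P_u$ alone is not compact on $\mathcal H^1$ in a trivial way: $2^{-ks}$ gains only a factor $2^{-ku}$. My first attempt would be the known estimate $\|S_N\|_{\mathcal H^1\to\mathcal H^1}\lesssim \log N$, or the Bohr–Helson type bound $\|S_N g\|_{\mathcal H^1}\lesssim \log N\,\|g\|_{\mathcal H^1}$. I would combine it with a dyadic decomposition:
\[
(I-S_N)P_u=\sum_{k\ge0}(S_{2^{k+1}N}-S_{2^kN})P_u .
\]
For the block $B_k=S_{2^{k+1}N}-S_{2^kN}$ I would estimate $B_kP_u=B_kP_{u/2}P_{u/2}$. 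The factor $P_{u/2}$ acting on terms with $n>2^kN$ should gain roughly $(2^kN)^{-u/2}$. The catch is that this gain is only a coefficientwise estimate, not an $\mathcal H^1$ multiplier bound.

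To make the gain rigorous, I would write $n^{-u/2}$ on $n\in(M,2M]$ via a smooth multiplier. Concretely, $P_{u/2}$ restricted to that block equals $B_M\circ m_M$, where $m_M$ has symbol $\phi(\log n/\log M)\,n^{-u/2}$. I would bound $m_M$ on $\mathcal H^1$ by $\lesssim M^{-u/2}$, using that $\mathcal H^1$ multipliers of the form $n\mapsto\psi(\log n)$ with $\psi$ of bounded variation act with norm $\le\|\psi\|_{BV}$. This holds because such a multiplier is an average of vertical translations and truncations $S_x$ (a Helson-type argument). An alternative I would keep in mind: $n^{-u}=\frac1{\Gamma}\int$ representations that express $P_u$ as an average of partial sums. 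Either way we get $\|B_kP_u\|\lesssim \log(2^{k+1}N)\,(2^kN)^{-u/2}$. Summing over $k$ gives $\varepsilon_N(u)\lesssim \sum_k (k+\log N)(2^kN)^{-u/2}\to0$.

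Finally I would verify the remaining routine facts: $P_u$ is bounded (indeed contractive) on $\mathcal A_\mu^1$, since horizontal translation is contractive on $\mathcal H^1$; and the estimate extends from polynomials to the completion. Admissibility of $\mu$ is used only to make sure $\mathcal A_\mu^1$ is a genuine space of Dirichlet series, so that the truncations $S_N$ are well defined on it (coefficients are continuous functionals). The constants are uniform in $\sigma$, so no further property of $\mu$ is needed.
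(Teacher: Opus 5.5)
Your framework is the paper's: approximate $P_u$ by the finite-rank operators $S_NP_u=P_uS_N$, control $\|S_N\|_{\mathcal H^1\to\mathcal H^1}$ logarithmically, and pass to $\mathcal A_\mu^1$ by integrating in $\sigma$. The gap is in the step you identify as the main obstacle, because you justify it by a false principle. Multipliers $n\mapsto\psi(\log n)$ with $\psi$ of bounded variation do \emph{not} act on $\mathcal H^1$ with norm at most $\|\psi\|_{BV}$. The cutoff $\psi=\mathbf 1_{(-\infty,\log N]}$ has variation bounded independently of $N$, yet its multiplier is $S_N$, which, as you say yourself, is not uniformly bounded on $\mathcal H^1$. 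Writing a BV multiplier as an average of truncations $S_x$ only gives a bound weighted by $\|S_x\|$, so it carries exactly the logarithmic loss you were trying to avoid. What is true is the Fourier--Stieltjes version: if $\psi=\widehat\nu$ for a finite measure $\nu$ on $\mathbb R$, the multiplier is an average of vertical translations and has norm at most $\|\nu\|$. There is a second problem with the block multiplier. Take a fixed smooth $\phi$ with $\phi=1$ on $[1,\log(2M)/\log M]$. Then the symbol $\phi(\log n/\log M)\,n^{-u/2}$ is at least of order $M^{-(1-\eta)u/2}$ for some $n<M$ and some fixed $\eta>0$. A multiplier's norm dominates the supremum of its symbol, so the bound $\|m_M\|\lesssim M^{-u/2}$ fails as stated.

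The missing idea is that the gain you call ``only coefficientwise'' is in fact a genuine norm estimate. Let $a_n(g)=0$ for $n\le M$ and $v>0$. Apply $e^{-v\xi}=\int_{\mathbb R}e^{it\xi}\,\frac{v\,dt}{\pi(v^2+t^2)}$ for $\xi\ge0$, with $\xi=\log\bigl(n/(M+1)\bigr)$, to get
\[
 g(s+v)=(M+1)^{-v}\int_{\mathbb R}(M+1)^{-it}\,g(s-it)\,\frac{v\,dt}{\pi(v^2+t^2)}.
\]
Vertical translation invariance and Minkowski's inequality then give $\|P_vg\|\le(M+1)^{-v}\|g\|$ in $\mathcal H^1$, and hence in $\mathcal A_\mu^1$. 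This is the lemma of G\'omez-Cabello, Lef\`evre and Queff\'elec (their Lemma~5.5) that the paper uses. With it you need neither the dyadic decomposition nor smooth multipliers:
\[
\|P_u-P_uS_N\|=\|P_u(I-S_N)\|\le(N+1)^{-u}\bigl(1+\|S_N\|\bigr)\lesssim N^{-u}\log N\to0 .
\]
This is exactly the paper's proof. Your $\log N$ bound on $\|S_N\|$ suffices there; the paper quotes the sharper $\log N/\log\log N$ of Bondarenko, Brevig, Saksman and Seip. Your block estimate $\|B_kP_u\|\lesssim\log(2^{k+1}N)\,(2^kN)^{-u/2}$ is in fact true, but it follows from this lemma, not from the argument you give for it.
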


\begin{proof}
Fix \(u>0\).  By \cite[Lemma~5.4]{GomezCabello},
\(P_u\) is a contraction
on \(\mathcal A_\mu^1\).  Moreover, by
\cite[Theorem~5.2]{BondarenkoBrevigSaksmanSeip}, for all sufficiently
large \(N\), we get
\[
 \|S_N\|_{\mathcal H^1\to\mathcal H^1}
 \lesssim
 \frac{\log N}{\log\log N}.
\]
Since \((S_Nf)_\sigma=S_N(f_\sigma)\), for every Dirichlet polynomial \(f\)
we have
\[
\begin{aligned}
 \|S_Nf\|_{\mathcal A_\mu^1}
 =\int_0^\infty\|S_N(f_\sigma)\|_{\mathcal H^1}
   \,\dd\mu(\sigma)\lesssim
 \frac{\log N}{\log\log N}
 \|f\|_{\mathcal A_\mu^1}.
\end{aligned}
\]
Hence, by density, we have
\begin{equation}\label{eq:SN-Amu1}
 \|S_N\|_{\mathcal A_\mu^1\to\mathcal A_\mu^1}
 \lesssim
 \frac{\log N}{\log\log N},
 \quad N\to\infty.
\end{equation}

By \cite[Theorem~9(i)]{BL}, each coefficient functional is
continuous on \(\mathcal A_\mu^1\). Hence, by density, the bounded
extension of \(S_N\) satisfies
\[
 S_Nf(s)=\sum_{n\leq N}a_n(f)n^{-s},
 \quad f\in\mathcal A_\mu^1.
\]
In particular, we have
$
 a_n\bigl((I-S_N)f\bigr)=0,
 1\leq n\leq N.
$
Therefore, \cite[Lemma~5.5]{GomezCabello} gives
\[
 \|P_u(I-S_N)f\|_{\mathcal A_\mu^1}
 \leq
 (N+1)^{-u}\|(I-S_N)f\|_{\mathcal A_\mu^1}.
\]
Together with \eqref{eq:SN-Amu1}, this yields

\[
\begin{aligned}
 \|P_u-P_uS_N\|
 &\leq
 (N+1)^{-u}\bigl(1+\|S_N\|\bigr)\\
 &\lesssim
 (N+1)^{-u}
 \left(1+\frac{\log N}{\log\log N}\right)
 \rightarrow0.
\end{aligned}
\]
Since \(P_uS_N\) has finite rank,
\(P_u\) is compact.
\end{proof}

\begin{lemma}\label{lem:primitive-compact-Amu}
For \(f\in\mathcal A_{\mu,0}^1\), define
\[
 Jf=\int_0^\infty P_tf\,\dd t,
\]
where the integral is taken in \(\mathcal A_\mu^1\).  Then
\(J:\mathcal A_{\mu,0}^1\to\mathcal A_{\mu,0}^1\) is compact.
Moreover, if
\(f(s)=\sum_{n\geq2}a_n n^{-s}\), then
\[
 Jf(s)=\sum_{n\geq2}\frac{a_n}{\log n}n^{-s}.
\]
\end{lemma}

\begin{proof}
We first verify that \(J\) is well defined.
By
\cite[Lemma~5.4]{GomezCabello},
\(\|P_t\|\leq1\) for \(t\geq0\).
For every Dirichlet polynomial \(Q\),
\[
 \|P_tQ-Q\|_{\mathcal A_\mu^1}\rightarrow0,
 \quad t\rightarrow0.
\]
If \(f\in\mathcal A_\mu^1\), then
\[
 \|P_tf-f\|_{\mathcal A_\mu^1}
 \leq
 2\|f-Q\|_{\mathcal A_\mu^1}
 +\|P_tQ-Q\|_{\mathcal A_\mu^1}.
\]
Approximating \(f\) by Dirichlet polynomials shows that
\((P_t)_{t\geq0}\) is a strongly continuous semigroup.
For \(f\in\mathcal A_{\mu,0}^1\),
using Lemma~5.5 in \cite{GomezCabello} with \(N=2\),
gives
\[
 \|P_tf\|_{\mathcal A_\mu^1}
 \leq2^{-t}\|f\|_{\mathcal A_\mu^1}.
\]
Consequently, the defining Bochner integral converges and
\[
 \|J\|
 \leq\int_0^\infty2^{-t}\,\dd t
 =\frac1{\log2}.
\]
Since \(\mathcal A_{\mu,0}^1=\ker a_1\) is closed and
\(a_1(P_tf)=a_1(f)=0\) for \(t\geq0\), the Bochner integral defining
\(Jf\) belongs to \(\mathcal A_{\mu,0}^1\).

We next prove compactness.  For \(u>0\), the restriction of
\(P_u\) to \(\mathcal A_{\mu,0}^1\) is compact
by Lemma~\ref{lem:translation-compact-Amu}; since \(J\) is bounded,
\(JP_u\) is compact.  The semigroup identity gives
\[
 JP_uf=\int_u^\infty P_tf\,\dd t,
\]
and therefore
\[
 (J-JP_u)f=\int_0^uP_tf\,\dd t.
\]
Since \(\|P_t\|\leq1\), we have
$
 \|J-JP_u\|\leq u\rightarrow0,
  u\rightarrow0.
$
Hence \(J\) is compact.

Finally, for every \(n\geq2\), we get
\[
\begin{aligned}
 a_n(Jf)
 =\int_0^\infty a_n(P_tf)\,\dd t=a_n(f)\int_0^\infty n^{-t}\,\dd t=\frac{a_n(f)}{\log n}.
\end{aligned}
\]
Therefore, uniqueness of the Dirichlet coefficients gives
\[
 Jf(s)=\sum_{n\geq2}\frac{a_n(f)}{\log n}n^{-s}.
\] The proof is complete.
\end{proof}

\begin{proof}[Proof of Theorem~\ref{thm:bergman-polynomial}]
For \(k\geq2\), define
$
 M_kf(s)=k^{-s}f(s).
$
As observed in the proof of
\cite[Proposition~7.12]{GomezCabello}, multiplication by a Dirichlet
monomial is contractive on
\(\mathcal A_\mu^1\), and
$
 M_k(\mathcal A_\mu^1)\subset\mathcal A_{\mu,0}^1.
$
For every Dirichlet polynomial \(f\), we have
\[
\begin{aligned}
 T_{k^{-s}}f(s)
  =(\log k)\int_0^\infty k^{-(s+t)}f(s+t)\,\dd t=(\log k)J(M_kf)(s).
\end{aligned}
\]
By Lemma~\ref{lem:primitive-compact-Amu}, \(JM_k\) is compact on
\(\mathcal A_\mu^1\).  Thus,  
 \(T_{k^{-s}}\) is compact on \(\mathcal A_\mu^1\) by density.
Finally,  \(T_g\) is compact as a finite sum of compact operators \(T_{k^{-s}}\).
\end{proof}

We use the weighted Bloch space
\(\operatorname{Bloch}_\mu(\Czero)\) and its little Bloch subspace
\(\operatorname{Bloch}_{\mu,0}(\Czero)\) introduced in
\cite[Definitions~3.10 and~3.25]{GomezCabello}, we refer to that paper for
their definitions and basic properties.  In view of Theorem~\ref{thm:bergman-polynomial}, the proof of
\cite[Theorem~7.13]{GomezCabello} also applies at the endpoint \(p=1\), with
\cite[Proposition~7.12]{GomezCabello} replaced by
Theorem~\ref{thm:bergman-polynomial}.  This yields the following result.

\begin{corollary}\label{cor:bergman-endpoint}
	For $1 \leq p<\infty.$ Let \(\mu\) be an admissible measure satisfying the conditions in
	\cite[Definition~3.10]{GomezCabello} and the \(H_1\)-condition in
	\cite[Definition~3.19]{GomezCabello}, and let
	\(g\in \operatorname{Bloch}_{\mu,0}(\Czero)\cap\cD\).  Then the operator
	$T_g$ is compact on $\mathcal A_\mu^p$. 
\end{corollary}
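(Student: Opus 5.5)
The plan is to follow the proof of \cite[Theorem~7.13]{GomezCabello} and use Theorem~\ref{thm:bergman-polynomial} to handle Dirichlet polynomial symbols. The argument is approximation of $g$ by polynomial symbols in the operator norm. The case $1<p<\infty$ is already \cite[Theorem~7.13]{GomezCabello}, so the new content is $p=1$; the argument below works uniformly for $1\le p<\infty$.

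\textbf{Step 1: a norm estimate.} First I would recall the boundedness estimate from \cite{GomezCabello}. Under the conditions of \cite[Definition~3.10]{GomezCabello} and the $H_1$-condition \cite[Definition~3.19]{GomezCabello}, one should have
\[
\|T_h\|_{\mathcal A_\mu^p\to\mathcal A_\mu^p}\lesssim \|h\|_{\operatorname{Bloch}_\mu(\Czero)}
\]
for $h\in\operatorname{Bloch}_\mu(\Czero)\cap\cD$. I would check that its proof does not use $p>1$. That proof goes through a Littlewood--Paley type formula for the $\mathcal A_\mu^p$ norm, or a vertical-limit/Bohr lift argument, with the weight $\mu$. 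I expect it to be valid for $p\ge1$, and indeed for all $p>0$.

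\textbf{Step 2: approximation of the symbol.} Next I would use the defining property of $\operatorname{Bloch}_{\mu,0}(\Czero)\cap\cD$ from \cite[Definition~3.25]{GomezCabello}. This property, or a characterization proved there, gives Dirichlet polynomials $g_N$ with $\|g-g_N\|_{\operatorname{Bloch}_\mu}\to0$. The natural candidates are Riesz-type means, or translates $g_\varepsilon=g(\cdot+\varepsilon)$ followed by truncation, mirroring the translation trick in Lemma~\ref{lem:distance}.

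\textbf{Step 3: compactness.} By Step 1, $\|T_g-T_{g_N}\|\lesssim\|g-g_N\|_{\operatorname{Bloch}_\mu}\to0$. Each $T_{g_N}$ is compact on $\mathcal A_\mu^p$: for $p=1$ this is Theorem~\ref{thm:bergman-polynomial}, and for $p>1$ it is \cite[Proposition~7.12]{GomezCabello}. Since the compact operators are closed in operator norm, $T_g$ is compact.

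\textbf{Main obstacle.} The main obstacle is Step 1 at $p=1$. Any use of the boundedness of the Riesz projection, or of $L^p$ duality with $1<p<\infty$, would fail at $p=1$. In that case I would instead fall back on an estimate that avoids duality: show directly
\[
\|T_hf\|_{\mathcal A_\mu^1}\lesssim \int\!\!\int |f||h'|\,\mathrm{d}(\text{weight}),
\]
then bound $|h'|$ by the Bloch$_\mu$ seminorm against the weight built from $\mu$ in \cite[Definition~3.10]{GomezCabello}. This route uses only the fact that $\mathcal A_\mu^1$ norms control weighted integrals of $|f|$ on half-planes.
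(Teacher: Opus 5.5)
Your proposal is correct and is essentially the paper's argument. The paper's proof is just the remark that the proof of \cite[Theorem~7.13]{GomezCabello} goes through at $p=1$ once \cite[Proposition~7.12]{GomezCabello} is replaced by Theorem~\ref{thm:bergman-polynomial}, which is exactly the substitution you make in Step~3; like you in Steps~1 and~2, it takes the remaining ingredients of that proof to hold at $p=1$ without further verification, so your fallback estimate in the last paragraph is not needed at the paper's level of detail.
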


\end{document}